\renewcommand{\arraystretch}{1.1}
\newcommand*\Dynkindots{\hbox to 2em{.\hss.\hss.}}
\def\DynkinNodeSize{1.5mm}
\def\DynkinDoubleArrowLength{3.25mm}
\def\DynkinTripleArrowLength{3.5mm}
\tikzset{
  bigdnode/.style={
    circle,
    inner sep=0pt,
    minimum size=2*\DynkinNodeSize,
    fill=white,
    draw},
  dnode/.style={
    circle,
    inner sep=0pt,
    minimum size=\DynkinNodeSize,
    fill=white,
    draw},
  bnode/.style={
    circle,
    inner sep=0pt,
    minimum size=\DynkinNodeSize,
    fill=black,
    draw},
  middlearrow/.style={
    decoration={markings,
      mark=at position 0.7 with
      {\draw (0:0mm) -- +(+160:\DynkinDoubleArrowLength); \draw (0:0mm) -- +(-160:\DynkinDoubleArrowLength);},
    },
    postaction={decorate}
  },
  triplemiddlearrow/.style={
    decoration={markings,
      mark=at position 0.7 with
      {\draw (0:0mm) -- +(+160:\DynkinTripleArrowLength); \draw (0:0mm) -- +(-160:\DynkinTripleArrowLength);},
    },
    postaction={decorate}
  },	
  sedge/.style={
  },
  dedge/.style={
    middlearrow,
    double distance=1mm,
  },
  tedge/.style={
    triplemiddlearrow,
    double distance=1.0mm+\pgflinewidth,
    postaction={draw},
  },
}
\newcommand*\xleftrightarrow[2][]{%
  \ext@arrow 9999{\longleftrightarrowfill@}{#1}{#2}}
\newcommand*\longleftrightarrowfill@{%
  \arrowfill@\leftarrow\relbar\rightarrow}
\definecolor{asmpgray}{gray}{0.85}
\newsavebox{\asmpbox}
\newenvironment{assumption}{
\vspace{5pt}
\begin{lrbox}{\asmpbox}
\begin{varwidth}{13cm}
\ignorespaces
}
{
\end{varwidth}
\end{lrbox}
\begin{center}
\setlength{\fboxsep}{5pt}
\fcolorbox{black}{asmpgray}{\usebox{\asmpbox}}
\end{center}
\ignorespacesafterend
\vspace{5pt}
}
\newtheorem{Thm}{Theorem}[section]
\newtheorem{Lm}[Thm]{Lemma}
\newtheorem{Cor}[Thm]{Corollary}
\theoremstyle{definition}
\newtheorem{0}[Thm]{}
\newtheorem{Rem}[Thm]{Remark}
\newtheorem*{Not}{Notation}
\newtheorem*{Ack}{Acknowledgements}
\numberwithin{equation}{Thm}
\newcommand*{\mf}{\mathbf}
\newcommand*{\rom}[1]{\uppercase\expandafter{\romannumeral#1}}
\newcommand*{\C}{\mathbb{C}}
\newcommand*{\F}{\mathbb{F}}
\newcommand*{\Fpbar}{\overline{\F}_{p}}
\newcommand*{\Fp}{\F_{p}}
\newcommand*{\Fq}{\F_{q}}
\newcommand*{\Q}{\mathbb{Q}}
\newcommand*{\Ql}{\Q_\ell}
\newcommand*{\Qlbar}{\overline{\Q}_{\ell}}
\newcommand*{\Qlbarunits}{\Qlbar^\times}
\newcommand*{\R}{\mathbb{R}}
\newcommand*{\Z}{\mathbb{Z}}
\newcommand*{\BNpair}{(B,N)\text{-pair}}
\newcommand*{\dt}{\mathsf}
\newcommand*{\fsep}{\quad}
\newcommand*{\LocE}{\mathcal E}
\newcommand*{\I}{\mathrm{i}}
\newcommand*{\ii}{\mathfrak{i}}
\newcommand*{\jj}{\mathfrak{j}}
\newcommand*{\modulo}[2]{{\raisebox{.2em}{$#1$}/\raisebox{-.2em}{$#2$}}}
\newcommand*{\scA}{{\mathscr{A}}}
\newcommand*{\cC}{{\mathcal{C}}}
\newcommand*{\scD}{{\mathscr{D}}}
\newcommand*{\cE}{{\mathcal{E}}}
\newcommand*{\scF}{{\mathscr{F}}}
\newcommand*{\cH}{{\mathcal{H}}}
\newcommand*{\scH}{{\mathscr{H}}}
\newcommand*{\cL}{{\mathcal{L}}}
\newcommand*{\cM}{{\mathcal{M}}}
\newcommand*{\scM}{{\mathscr{M}}}
\newcommand*{\cN}{{\mathcal{N}}}
\newcommand*{\cO}{{\mathcal{O}}}
\newcommand*{\scW}{{\mathscr{W}}}
\DeclareMathOperator{\CF}{CF}
\DeclareMathOperator{\End}{End}
\DeclareMathOperator{\Hom}{Hom}
\DeclareMathOperator{\IC}{IC}
\DeclareMathOperator{\id}{id}
\DeclareMathOperator{\ind}{ind}
\DeclareMathOperator{\Int}{Int}
\DeclareMathOperator{\Irr}{Irr}
\DeclareMathOperator{\sgn}{sgn}
\DeclareMathOperator{\supp}{supp}
\DeclareMathOperator{\Trace}{Trace}
\DeclareMathOperator{\Uch}{Uch}
\title[Generalised Springer correspondence for type \texorpdfstring{$\dt E_8$}{E8}]{On the generalised Springer correspondence for groups of type \texorpdfstring{$\dt E_8$}{E8}}
\date{\today}
\author{Jonas Hetz}
\address{IDSR - Lehrstuhl für Algebra, Universität Stuttgart, Pfaffenwaldring 57, D--70569 Stuttgart, Germany}
\email{jonas.hetz@mathematik.uni-stuttgart.de}
\keywords{(Finite) groups of Lie type, character sheaves, generalised Springer correspondence}
\subjclass[2020]{Primary 20C33; Secondary 20G40, 20G41}
\begin{document}

\begin{abstract}
We complete the determination of the generalised Springer correspondence for connected reductive algebraic groups, by proving a conjecture of Lusztig on the last open cases which occur for groups of type $\dt E_8$.
\end{abstract}

\maketitle

\section{Introduction}

Let $\mf G$ be a connected reductive algebraic group over an algebraic closure $k$ of the finite field $\Fp$ with $p$ elements, where $p$ is a prime. Let $\mf W$ be the Weyl group of $\mf G$ and $\cN_\mf G$ be the set of all pairs $(\cO,\cE)$ where $\cO\subseteq\mf G$ is a unipotent conjugacy class and $\cE$ is an irreducible local system on $\cO$ (taken up to isomorphism) which is equivariant for the conjugation action of $\mf G$. The Springer correspondence (originally defined in \cite{SCorr} for $p$ not too small; for arbitrary $p$ see \cite{LuGreenPol}) defines an injective map $\iota_\mf G\colon\Irr(\mf W)\hookrightarrow\cN_\mf G$ which plays a crucial role, for example, in the determination of the values of the Deligne--Lusztig Green functions of \cite{DL}; for recent surveys see \cite[Chap.~13]{DM20} and \cite[\S 2.8]{GM}. However, the map $\iota_\mf G$ is not surjective in general. In order to understand the missing pairs in $\cN_\mf G$, Lusztig \cite{LuIC} developed a generalisation of Springer's correspondence. This in turn constitutes a substantial part of the general problem of computing the complete character tables of finite groups of Lie type.

With very few exceptions, the problem of determining explicitly the generalised Springer correspondence has been solved in \cite{LuIC}, \cite{LuSp}, \cite{Sp} (see also the further references there). The exceptions occur for $\mf G$ of type $\dt E_6$ with $p\neq3$ and $\dt E_8$ with $p=3$. Recently, Lusztig \cite{LugenSpringer} settled the case where $\mf G$ is of type $\dt E_6$ and stated a conjecture concerning the last open cases in type $\dt E_8$. It is the purpose of this paper to prove that conjecture, thus completing the determination of the generalised Springer correspondence in all cases.

The paper is organised as follows. In \Cref{SecSpring}, we recall the definition of the generalised Springer correspondence, due to \cite{LuIC}. In \Cref{SecParam}, we explain two parametrisations of unipotent characters and unipotent character sheaves, one in terms of Lusztig's Fourier transform matrices associated to families in the Weyl group, and the other one in terms of Harish-Chandra series, for simple groups of adjoint type with a split rational structure. In the last two sections, we focus on the specific case where $\mf G$ is simple of type $\dt E_8$ in characteristic $p=3$: \Cref{SecWG2} contains the description of the generalised Springer correspondence for this group, so that we can formulate Lusztig's conjecture on the last open cases as indicated above. Finally, the proof of this conjecture is given in \Cref{SecPf}. It is based on considering the Hecke algebra associated to the finite group $\dt E_8(q)$ (where $q=3^f$ for some $f\geqslant1$) and its natural $\BNpair$, exploiting a well-known formula relating characters of this Hecke algebra with the unipotent principal series characters of $\dt E_8(q)$.

\begin{Not}
As soon as a prime $p$ is fixed in a given setting, we denote by $k=\Fpbar$ an algebraic closure of the finite field $\Fp$ with $p$ elements; furthermore, we tacitly assume to have fixed any other prime $\ell\neq p$, as well as an algebraic closure $\Qlbar$ of the field $\Ql$ of $\ell$-adic numbers. It will be convenient to assume the existence of an isomorphism $\Qlbar\simeq\C$ and to fix such an isomorphism once and for all\footnote{Strictly speaking, the existence of such an isomorphism requires the axiom of choice. However, what we really need is an isomorphism between algebraic closures of $\Q$ in $\Qlbar$ and $\C$, and such an isomorphism is known to exist without reference to the axiom of choice, cf. \cite[Remark 1.2.11]{DeligneWeyl2}.}, so that we can speak of \enquote{complex} conjugation or absolute values for elements of $\Qlbar$ using this isomorphism. In this way, we will also identify the rational numbers $\Q$ or the real numbers $\R$ as subsets of $\Qlbar$ and just write $\Q\subseteq\R\subseteq\Qlbar$. In several places below, we will need to fix a square root of $p$ (or of powers of $p$), so we do this right away:
\begin{equation}\label{sqrtp} 
\begin{split}
&\text{From now on we fix, once and for all, a square root } \sqrt p \text{ of } p \text{ in }\Qlbar. \\
&\text{Furthermore, whenever }q=p^e\; (e\geqslant1), \text{ we set } \sqrt q:={(\sqrt p)}^e.
\end{split}
\end{equation}
For any finite group $\Gamma$, we denote by $\CF(\Gamma)$ the set of class functions $\Gamma\rightarrow\Qlbar$ and by $\Irr(\Gamma)\subseteq\CF(\Gamma)$ the subset consisting of irreducible characters of $\Gamma$ over $\Qlbar$. Thus, $\Irr(\Gamma)$ is an orthonormal basis of $\CF(\Gamma)$ with respect to the scalar product
\[{\langle f, f'\rangle}_{\Gamma}:=|\Gamma|^{-1}\sum_{g\in\Gamma}f(g)\overline{f'(g)}\quad (\text{for }f, f'\in\CF(\Gamma)).\]
Now let $p$ be a prime and $\mf G$ be a connected reductive group over $k=\Fpbar$, defined over the finite subfield $\F_q\subseteq k$ where $q=p^f$ for some $f\geqslant1$, with corresponding Frobenius map $F\colon\mf G\rightarrow\mf G$. With respect to this $F$, we fix a maximally split torus $\mf T_0\subseteq\mf G$ and an $F$-stable Borel subgroup $\mf B_0\subseteq\mf G$ such that $\mf T_0\subseteq\mf B_0$. Let $\Phi$ be the set of roots of $\mf G$ with respect to $\mf T_0$, and let $\Pi\subseteq\Phi$ be the subset of simple roots determined by $\mf T_0\subseteq\mf B_0$. We denote by $\mf W=\modulo{N_\mf G(\mf T_0)}{\mf T_0}$ the Weyl group of $\mf G$ (relative to $\mf T_0$). Given any closed subgroup $\mf H\subseteq\mf G$ (including the case where $\mf H=\mf G$), we denote by $\mf H^\circ\subseteq\mf H$ its identity component, by $\mf Z(\mf H)\subseteq\mf H$ the centre of $\mf H$ and by $\mf H_{\mathrm{uni}}\subseteq\mf H$ the (closed) subvariety consisting of all unipotent elements of $\mf H$. If $F(\mf H)=\mf H$, we set
\[\mf H^F:=\{h\in\mf H\mid F(h)=h\}\subseteq\mf H,\]
a finite subgroup of $\mf H$. In particular, $\mf G^F$ is the finite group of Lie type associated to $(\mf G,F)$.
\end{Not}

\section{The generalised Springer correspondence}\label{SecSpring}

Let $p$ be a prime, and let $\mf G$ be a connected reductive group over $k=\Fpbar$, with Frobenius map $F\colon\mf G\rightarrow\mf G$ providing $\mf G$ with an $\Fq$-rational structure where $q=p^f$ for some $f\geqslant1$ (and with the further notation as in the introduction). In this section, we give the definition of the generalised Springer correspondence for $\mf G$, due to Lusztig \cite{LuIC}.

\begin{0}\label{IntroCS}
We start by briefly introducing some of the most important notions of Lusztig's theory of character sheaves \cite{LuIntroCS} and the underlying theory of perverse sheaves \cite{BBD}. For the details we refer to \cite{LuIC}, \cite{LuCS1}--\cite{LuCS5}. Recall that $\ell\neq p$ is a fixed prime. Let $\scD\mf G$ be the bounded derived category of constructible $\Qlbar$-sheaves on $\mf G$ in the sense of Beilinson--Bernstein--Deligne \cite{BBD}. To each $K\in\scD\mf G$ and each $i\in\Z$ is associated the $i$th cohomology sheaf $\scH^i(K)$, whose stalks $\scH^i_g(K)$ (for $g\in\mf G$) are finite-dimensional $\Qlbar$-vector spaces. The support of such a $K$ is defined as
\[\supp K:=\overline{\{g\in\mf G\mid \scH^i_g K\neq0\text{ for some }i\in\Z\}}\subseteq \mf G\]
(where the bar stands for the Zariski closure, here in $\mf G$). We define a sign
\begin{equation}\label{epsK}
\hat{\varepsilon}_K:=(-1)^{\dim\mf G-\dim\supp K}.
\end{equation}
Let $\scM\mf G$ be the full subcategory of $\scD\mf G$ consisting of the perverse sheaves on $\mf G$; the category $\scM\mf G$ is abelian. Assume that $X$ is a locally closed subvariety of $\mf G$ and that $\cL$ an irreducible $\Qlbar$-local system on $X$. (We will just speak of a \enquote{local system} when we mean a $\Qlbar$-local system from now on.) There is a unique extension of $\cL$ to $\overline X$, namely, the intersection cohomology complex $\IC(\overline X,\cL)[\dim X]$ (where $[\,]$ denotes shift), due to Deligne--Goresky--MacPherson (see \cite{GMP}, \cite{BBD}). The following notation will be convenient: For $X$ and $\cL$ as above, and for any closed subvariety $Y\subseteq\mf G$ such that $X\subseteq Y$, we denote by
\[{\IC(\overline X,\cL)[\dim X]}^{\#Y}\in\scD Y\]
the extension of $\IC(\overline X,\cL)[\dim X]$ to $Y$, by $0$ on $Y\setminus\overline X$. Now let us bring the Frobenius endomorphism $F\colon\mf G\rightarrow\mf G$ into the picture. Let $F^\ast\colon\scD\mf G\rightarrow\scD\mf G$ be its inverse image functor. Let $K\in\scD\mf G$ be such that $F^\ast K\cong K$ in $\scD\mf G$, and let $\varphi\colon F^\ast K\xrightarrow{\sim}K$ be an isomorphism. For each $i\in\Z$ and $g\in\mf G^F$, $\varphi$ induces linear maps
\[\varphi_{i,g}\colon\scH^i_g(K)\rightarrow\scH^i_g(K).\]
Since only finitely many of the $\scH^i_g(K)$ ($g\in\mf G^F$, $i\in\Z$) are non-zero, one can define a characteristic function \cite[8.4]{LuCS2}
\[\chi_{K,\varphi}\colon\mf G^F\rightarrow\Qlbar,\fsep g\mapsto\sum_{i\in\Z}(-1)^i\Trace(\varphi_{i,g},\scH^i_g(K)).\]
In \cite[\S 2]{LuCS1}, Lusztig defines the character sheaves on $\mf G$ as certain simple objects of $\scM\mf G$ which are equivariant for the conjugation action of $\mf G$ on itself. We denote by $\hat{\mf G}$ a set of representatives for the isomorphism classes of character sheaves on $\mf G$. An important subset of $\hat{\mf G}$ is the one consisting of \emph{cuspidal} character sheaves, as defined in \cite[3.10]{LuCS1}. We denote by ${\hat{\mf G}}^\circ\subseteq\hat{\mf G}$ a set of representatives for the isomorphism classes of cuspidal character sheaves on $\mf G$. The inverse image functor $F^\ast$ may also be regarded as a functor $F^\ast\colon\scM\mf G\rightarrow\scM\mf G$, and we have $F^\ast(\hat{\mf G})=\hat{\mf G}$. Thus, we can consider the subset
\[\hat{\mf G}^F:=\{A\in\hat{\mf G}\mid F^\ast A\cong A\}\subseteq\hat{\mf G}\]
consisting of the $F$-stable character sheaves in $\hat{\mf G}$. Of particular relevance for our purposes will be the set $\hat{\mf G}^{\mathrm{un}}\subseteq\hat{\mf G}$ of (representatives for the isomorphism classes of) \emph{unipotent} character sheaves. These are, by definition, the simple constituents of a perverse cohomology sheaf $\leftidx{^p}{\!H}{^i}(K_w^{\cL_0})\in\scM\mf G$ for some $w\in\mf W$ and some $i\in\Z$, where $K_w^{\cL_0}\in\scD\mf G$ is as defined in \cite[2.4]{LuCS1}, with respect to the trivial local system $\cL_0=\Qlbar$ on the torus $\mf T_0$.
\end{0}

\begin{0}\label{LuIC}
Let $\cN_\mf G$ be the set of all pairs $(\cO,\cE)$ where $\cO\subseteq\mf G$ is a unipotent conjugacy class and $\cE$ is an irreducible local system on $\cO$ (up to isomorphism) which is equivariant for the conjugation action of $\mf G$ on $\cO$. We define another set $\cM_\mf G$, as follows: Consider a triple $(\mf L,\cO_0,\cE_0)$ consisting of a Levi complement $\mf L$ of some parabolic subgroup of $\mf G$, a unipotent class $\cO_0$ of $\mf L$ and an irreducible local system $\cE_0$ on $\cO_0$ (up to isomorphism) which is equivariant for the conjugation action of $\mf L$ on $\cO_0$; furthermore, assume that $({\mf Z(\mf L)}^\circ.\cO_0,1\boxtimes\cE_0)$ is a cuspidal pair for $\mf L$ in the sense of \cite[2.4]{LuIC}, where $1\boxtimes\cE_0$ denotes the inverse image of $\cE_0$ under the canonical map ${\mf Z(\mf L)}^\circ.\cO_0\rightarrow\cO_0$. The group $\mf G$ acts naturally on the set of all such triples by means of $\mf G$-conjugacy:
\[\leftidx{^g}(\mf L,\cO_0,\LocE_0):=(g\mf Lg^{-1},g\cO_0g^{-1},\Int(g^{-1})^\ast\LocE_0)\quad\text{for }g\in\mf G,\]
where $\Int(g^{-1})\colon\mf G\rightarrow\mf G$ is the inner automorphism given by conjugation with $g^{-1}$. Let $\cM_\mf G$ be the set of equivalence classes of triples $(\mf L,\cO_0,\cE_0)$ as above under this action. By a slight abuse of notation, we will just write $(\mf L,\cO_0,\cE_0)\in\cM_\mf G$ rather than $[(\mf L,\cO_0,\cE_0)]\in\cM_\mf G$ or the like. We typically write $\ii,\ii',...$ for elements of $\cN_\mf G$ and $\jj,\jj',...$ for elements of $\cM_\mf G$. Following \cite[\S 6]{LuIC}, any $\jj\in\cM_\mf G$ gives rise to a certain perverse sheaf $K_\jj\in\scM\mf G$ whose endomorphism algebra $\scA_\jj:=\End_{\scM\mf G}(K_\jj)$ is isomorphic to the group algebra of the relative Weyl group $\scW_\jj:=W_\mf G(\mf L)=\modulo{N_\mf G(\mf L)}{\mf L}$, see \cite[Theorem 9.2]{LuIC}. Thus, the isomorphism classes of the simple direct summands of $K_\jj$ are naturally parametrised by $\Irr(\scW_\jj)$, so we have
\begin{equation}\label{DecompKj}
K_{\jj}\cong\bigoplus_{\phi\in\Irr(\scW_{\jj})}(A_\phi\otimes V_\phi)
\end{equation}
where $A_\phi$ is the simple direct summand of $K_{\jj}$ corresponding to $\phi\in\Irr(\scW_{\jj})$, and where $V_\phi=\Hom_{\scM\mf G}(A_\phi, K_{\jj})$. Then, for any $\phi\in\Irr(\scW_\jj)$, there exists a unique $(\cO,\cE)\in\cN_\mf G$ for which
\begin{equation}\label{AphiGuni}
A_\phi|_{\mf G_{\mathrm{uni}}}\cong{\IC(\overline{\cO}, \cE)[\dim{{\mf Z(\mf L)}^\circ}+\dim{\cO}]}^{\#\mf G_{\mathrm{uni}}},
\end{equation}
and the isomorphism class of $A_\phi$ is uniquely determined by this property among the simple perverse sheaves which are constituents of $K_{\jj}$, see \cite[24.1]{LuCS5}. So for each $\jj\in\cM_\mf G$, the above procedure gives rise to an injective map 
\[{\Irr(\scW_{\jj})}\hookrightarrow{\cN_\mf G}.\]
Conversely, given any $(\cO,\cE)\in\cN_\mf G$, there exists a unique $\jj\in\cM_\mf G$ such that $(\cO,\cE)$ is in the image of the map ${\Irr(\scW_{\jj})}\hookrightarrow{\cN_\mf G}$ just defined. So there is an associated surjective map
\[\tau\colon\cN_\mf G\rightarrow\cM_\mf G,\]
whose fibres are called the blocks of $\cN_\mf G$. For any $\jj=(\mf L,\cO_0,\cE_0)\in\cM_\mf G$, the elements in the block $\tau^{-1}(\jj)\subseteq\cN_\mf G$ are thus parametrised by the irreducible characters of $\scW_\jj=W_\mf G(\mf L)$. The collection of the bijections
\begin{equation}\label{GenSpring}
\Irr(\scW_\jj)\xrightarrow{\sim}\tau^{-1}(\jj)\qquad(\text{for }\jj\in\cM_\mf G)
\end{equation}
is called the generalised Springer correspondence. If $\ii=(\cO,\cE)\in\cN_\mf G$ and if $\phi\in\Irr(\scW_{\tau(\ii)})$ corresponds to $\ii$ under \eqref{GenSpring}, we will write $A_{\ii}:=A_\phi$. If we only consider the element $(\mf T_0, \{1\}, \Qlbar)\in\cM_\mf G$, the map \eqref{GenSpring} defines an injection
\begin{equation}\label{OrdSpring}
\Irr(\mf W)\hookrightarrow\cN_\mf G,
\end{equation}
which is called the (ordinary) Springer correspondence. The problem of determining the generalised Springer correspondence (that is, explicitly describing the bijections \eqref{GenSpring} for all $\jj\in\cM_\mf G$) can be reduced to considering simple algebraic groups $\mf G$ of simply connected type, thus can be approached by means of a case-by-case analysis. This has been accomplished for almost all such $\mf G$, thanks to the work of Lusztig \cite{LuIC}, Lusztig--Spaltenstein \cite{LuSp}, Spaltenstein \cite{Sp} (see also the references there for earlier results concerning the ordinary Springer correspondence), and again Lusztig \cite{LugenSpringer}, the only remaining open problems occur for $\mf G$ of type $\dt E_8$ in characteristic $p=3$ (for which a conjecture is made in \cite[\S 6]{LugenSpringer}). In particular, the ordinary Springer correspondence \eqref{OrdSpring} is explicitly known in complete generality.
\end{0}

\begin{0}\label{XiYi}
We keep the setting of \Cref{LuIC} and consider the Frobenius map $F\colon\mf G\rightarrow\mf G$. It defines actions on $\cN_\mf G$ and $\cM_\mf G$, given by
\[\cN_\mf G\rightarrow\cN_\mf G,\fsep(\cO,\cE)\mapsto(F^{-1}(\cO),F^\ast\cE),\]
and
\[\cM_\mf G\rightarrow\cM_\mf G,\fsep(\mf L,\cO_0,\cE_0)\mapsto(F^{-1}(\mf L),F^{-1}(\cO_0),F^\ast\cE_0).\]
Let $\cN_\mf G^F$, $\cM_\mf G^F$ be the respective sets of fixed points under these actions where, in terms of the local systems, this is only meant up to isomorphism, and for the triples in $\cM_\mf G$ in addition only up to $\mf G$-conjugacy. The map $\tau$ commutes with the action of $F$ on $\cN_\mf G$, $\cM_\mf G$, so it gives rise to a surjective map $\cN_\mf G^F\rightarrow\cM_\mf G^F$. Furthermore, the generalised Springer correspondence \eqref{GenSpring} induces bijections
\[{\Irr(\scW_\jj)}^F\xrightarrow{\sim}\tau^{-1}(\jj)\cap\cN_\mf G^F\qquad(\text{for }\jj\in\cM_\mf G^F).\]
(Here, ${\Irr(\scW_\jj)}^F$ denotes the set of all characters in $\Irr(\scW_\jj)$ which are invariant under the automorphism of $\scW_\jj$ induced by $F$.) Let $\jj=(\mf L,\cO_0,\cE_0)\in\cM_\mf G^F$ and $\phi\in{\Irr(\scW_{\jj})}^F$, and assume that $\ii=(\cO,\cE)$ is the corresponding element of $\tau^{-1}(\jj)\cap\cN_\mf G^F$. Choosing an isomorphism $\varphi_0\colon F^\ast\cE_0\xrightarrow{\sim}\cE_0$ which induces a map of finite order at the stalk of $\cE_0$ at any element of $\cO_0^F$ allows the definition of a unique isomorphism $\varphi_\ii\colon{F^\ast A_{\ii}}\xrightarrow{\sim}{A_{\ii}}$ (see \cite[24.2]{LuCS5} and also \Cref{CharFctLuAlg} below). We set
\begin{align*}
a_\ii&:=-\dim\cO-\dim{\mf Z(\mf L)}^\circ, \\
b_\ii&:=\dim\supp A_{\ii}, \\
d_{\ii}&:=\frac12(a_\ii+b_\ii).
\end{align*}
We then have
\[\scH^a(A_{\ii})|_{\cO}\cong\begin{cases} \cE&\text{if }a=a_\ii,\\ \hfil0&\text{if }a\neq a_\ii, \end{cases}\]
so we can define an isomorphism $\psi_{\ii}\colon{F^\ast\cE}\xrightarrow{\sim}{\cE}$ by the requirement that $q^{d_\ii}\psi_{\ii}$ is equal to the isomorphism ${F^\ast{\scH^{a_\ii}(A_{\ii})|_{\cO}}}\xrightarrow{\sim}{{\scH^{a_\ii}(A_{\ii})}|_{\cO}}$ induced by $\varphi_\ii$. It is shown in \cite[(24.2.4)]{LuCS5} that for any $u\in\cO^F$, the induced map $\psi_{{\ii},u}\colon\cE_u\rightarrow\cE_u$ on the stalk of $\cE$ at $u$ is of finite order. Now we define two functions
\[X_{\ii},\;Y_{\ii}\colon\mf G^F_{\mathrm{uni}}\rightarrow\Qlbar\]
by
\[X_{\ii}(u):=(-1)^{a_\ii}q^{-d_{\ii}}\chi_{A_{\ii},\varphi_\ii}(u)\]
and
\[Y_{\ii}(u):=\begin{cases}
\mathrm{Trace}(\psi_{{\ii},u},\cE_u)&\text{if }u\in \cO^F, \\
 \hfil0&\text{if }u\notin \cO^F,
\end{cases}\]
for $u\in\mf G^F_{\mathrm{uni}}$. Both $X_{\ii}$ and $Y_{\ii}$ are invariant under the conjugation action of $\mf G^F$ on $\mf G^F_{\mathrm{uni}}$.
\end{0}

\begin{Thm}[Lusztig {\cite[\S 24]{LuCS5}}]\label{LusztigAlgorithm}
In the setting of \Cref{XiYi}, the following hold.
\begin{enumerate}[(a)]
\item The functions $Y_{\ii}$, $\ii\in\cN_\mf G^F$, form a basis of the vector space consisting of all functions $\mf G^F_{\mathrm{uni}}\rightarrow\Qlbar$ which are invariant under $\mf G^F$-conjugacy.
\item There is a system of equations
\[X_{\ii}=\sum_{\ii'\in\cN_\mf G^F}p_{\ii',\ii}Y_{\ii'}\quad(\ii\in\cN_\mf G^F),\]
for some uniquely determined $p_{\ii',\ii}\in\Z$.
\end{enumerate}
\end{Thm}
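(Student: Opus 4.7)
The plan is to handle (a) first and then to deduce (b) from it. For (a), I would first observe that $Y_\ii$ is, by construction, supported on $\cO^F$ when $\ii=(\cO,\cE)$, so the $Y_\ii$ split into groups indexed by $F$-stable unipotent classes $\cO$ with pairwise disjoint supports. The claim therefore reduces to showing, for each fixed $F$-stable unipotent class $\cO$, that the restrictions $Y_{(\cO,\cE)}|_{\cO^F}$ (as $\cE$ runs over $F$-stable irreducible $\mf G$-equivariant local systems on $\cO$, up to isomorphism) form a basis of the space of $\mf G^F$-conjugation invariant functions on $\cO^F$. I would fix $u_0\in\cO^F$ and apply Lang--Steinberg to $C_\mf G(u_0)$: this identifies the $\mf G^F$-conjugacy classes in $\cO^F$ with the $F$-conjugacy classes in $A(u_0):=C_\mf G(u_0)/C_\mf G(u_0)^\circ$, and simultaneously identifies $\mf G$-equivariant irreducible local systems on $\cO$ with $\Irr(A(u_0))$, with $F$-stability on the two sides matching. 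Under the finite-order choice $\varphi_0$ on stalks, $Y_\ii$ transports to a value of the corresponding $F$-extension of the character of $A(u_0)$ on the corresponding $F$-conjugacy class; the resulting square twisted character-table matrix is invertible, which delivers both the dimension count and the linear independence required in (a).

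For (b), once (a) is in hand, writing $X_\ii$ in the basis $\{Y_{\ii'}\}$ yields unique scalars $p_{\ii',\ii}\in\Qlbar$; the substantive claim is that these scalars lie in $\Z$. I would first unpack \eqref{AphiGuni}: on $\mf G_{\mathrm{uni}}$, $A_\ii$ is (extended by zero from) $\IC(\overline\cO,\cE)[\dim\mf Z(\mf L)^\circ+\dim\cO]$, so $\supp A_\ii=\overline\cO$, and on stalks over $\cO$ itself $\scH^a(A_\ii)$ is concentrated in degree $a_\ii$ and equals $\cE$ there. A short computation, using that $q^{d_\ii}\psi_\ii$ is the isomorphism induced by $\varphi_\ii$ on $\scH^{a_\ii}(A_\ii)|_\cO$, shows that the normalisation $(-1)^{a_\ii}q^{-d_\ii}$ in the definition of $X_\ii$ is exactly tailored so that $X_\ii|_{\cO^F}=Y_\ii|_{\cO^F}$. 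Combined with the vanishing of $X_\ii$ on any class not contained in $\overline\cO$, this forces $p_{\ii,\ii}=1$ and $p_{\ii',\ii}=0$ for $\ii'=(\cO,\cE')$ with $\cE'\not\cong\cE$, as well as $p_{\ii',\ii}=0$ whenever $\cO'\not\subseteq\overline\cO$. Thus $(p_{\ii',\ii})$ is unitriangular with respect to the partial order on $\cN_\mf G^F$ coming from closure inclusion of the underlying unipotent classes, and uniqueness follows from (a).

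The main obstacle I expect is the integrality $p_{\ii',\ii}\in\Z$ for $\cO'\subsetneq\cO$. For $u'\in(\cO')^F$, $X_\ii(u')$ is an alternating sum of traces of Frobenius on the stalks $\scH^i(\IC(\overline\cO,\cE))_{u'}$, normalised by $(-1)^{a_\ii}q^{-d_\ii}$; the $Y_{\ii'}$-coordinates of this restriction are then obtained by inverting the twisted character-table matrix from part (a). Showing that this procedure lands in $\Z$ rather than merely in an extension of $\Q$ is a purity-and-cleanness statement about IC sheaves associated to cuspidal data: one needs that, after the rescaling by $q^{-d_\ii}$, the relevant Frobenius eigenvalues are roots of unity, and that the alternating sums, when expanded in the root-of-unity-valued $Y_{\ii'}$-basis, combine to rational integers. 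This is precisely the technical heart of \cite[\S 24]{LuCS5}, and is where the bulk of the work of the proof would sit.
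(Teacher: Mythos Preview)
Your outline is a reasonable sketch of the argument underlying Lusztig's result, but you should be aware that the paper does not actually prove this theorem: its proof consists entirely of the citation ``See \cite[(24.2.7) and (24.2.9)]{LuCS5}'', together with the remark that the restrictions on $p$ imposed in \cite[(23.0.1)]{LuCS5} can be lifted thanks to \cite[3.10]{LuclCS}. So there is nothing to compare at the level of mathematical argument --- the paper treats this as a black-box import from Lusztig.

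That said, two comments on your sketch. First, the one point the paper does make explicit --- and which your proposal omits --- is precisely the issue of characteristic: Lusztig's original argument in \cite[\S 24]{LuCS5} is carried out under hypotheses on $p$ (cleanness of cuspidal character sheaves, parity of certain dimensions), and one needs the later results of \cite{LuclCS} to remove them. If you were to flesh out your integrality paragraph, this is where that caveat would enter. Second, your identification of the ``technical heart'' is accurate: the unitriangularity and the diagonal entries $p_{\ii,\ii}=1$ are indeed soft consequences of \eqref{AphiGuni} and the normalisation of $X_\ii$, exactly as you say (and the paper records these in \ref{CorLusztigAlgorithm}(i),(ii)); the genuine content is the integrality of the off-diagonal $p_{\ii',\ii}$, which you correctly defer to \cite[\S 24]{LuCS5} rather than attempt directly.
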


\begin{proof}
See \cite[(24.2.7) and (24.2.9)]{LuCS5}. Note that the restrictions \cite[(23.0.1)]{LuCS5} on the characteristic $p$ of $k$ can be removed thanks to the remarks in \cite[3.10]{LuclCS}.
\end{proof}

\begin{0}\label{CorLusztigAlgorithm}
As an immediate consequence of \Cref{LusztigAlgorithm} (and of \Cref{XiYi}), we see that:
\begin{enumerate}
\item[(i)] We have $p_{\ii,\ii}=1$ for all $\ii\in\cN_\mf G^F$;
\item[(ii)] If $\ii'=(\cO',\cE')\neq\ii=(\cO,\cE)$, then $p_{\ii',\ii}\neq0$ implies $\cO'\neq\cO$ and $\cO'\subseteq\overline{\cO}$;
\item[(iii)] If $\ii',\ii\in\cN_\mf G^F$ belong to different blocks, we have $p_{\ii',\ii}=0$.
\end{enumerate}
So if we define a total order $\leqslant$ on $\cN_\mf G^F$ in such a way that for $\ii=(\cO,\cE)$, $\ii'=(\cO',\cE')\in\cN_\mf G^F$ we have
\[\ii'\leqslant \ii\;\text{ whenever }\;\cO'\subseteq\overline{\cO}\]
(note that the latter defines a partial order on the set of unipotent classes of $\mf G$), the matrix $(p_{\ii',\ii})_{\ii',\ii\in\cN_\mf G^F}$ has upper unitriangular shape. In \cite[\S 24]{LuCS5}, Lusztig provides an algorithm for computing this matrix $(p_{\ii',\ii})$ which entirely relies on combinatorial data. This algorithm is implemented in Michel's development version of {\sffamily {CHEVIE}} \cite{MiChv} and is accessible via the functions \texttt{UnipotentClasses} and \texttt{ICCTable}.
\end{0}

\begin{Rem}\label{AGu}
(a) Let $\cO\subseteq\mf G$ be a unipotent conjugacy class, and let us fix an element $u\in\cO$. Given a $\mf G$-equivariant irreducible local system $\cE$ on $\cO$, the stalk $\cE_{u}$ is in a natural way an irreducible module for the group $A_\mf G(u):=\modulo{C_\mf G(u)}{{C_\mf G(u)}^\circ}$, and the assignment $\cE\mapsto\cE_{u}$ defines a bijection between the set of isomorphism classes of $\mf G$-equivariant irreducible local systems on $\cO$ and the set of isomorphism classes of irreducible modules for $A_\mf G(u)$ (see \cite[3.5]{ShGeomOrb} or \cite[19.7]{LuCSDC4}). Using this identification, it will sometimes be convenient to write $(u,\varsigma)\in\cN_\mf G$ instead of $(\cO,\cE)$ when $\varsigma\in\Irr(A_\mf G(u))$ describes the local system $\cE$ and, similarly, $(\mf L,u,\varsigma)\in\cM_\mf G$ instead of $(\mf L,\cO_0,\cE_0)$ in case $u\in\cO_0$ and $\cE_0$ corresponds to $\varsigma\in\Irr(A_\mf L(u))$.

(b) We will also use the following notation: For $u\in\mf G_{\mathrm{uni}}$, we write $\overline u\in A_\mf G(u)$ for the image of $u$ under the canonical map $C_\mf G(u)\rightarrow A_\mf G(u)$. If $u\in\mf G_{\mathrm{uni}}$ is fixed and $A_\mf G(u)$ is a cyclic group generated by $\overline u$, we will denote the irreducible characters of $A_\mf G(u)$  just by their values at $\overline u$.

(c) Finally, assume that $\cO\subseteq\mf G$ is a unipotent conjugacy class so that $A_{\mf G}(u)=\langle\overline u\rangle$ for $u\in\cO$. If $\cE$ is an irreducible $\mf G$-equivariant local system on $\cO$ corresponding to the irreducible character of $A_\mf G(u)$ which takes the value $\zeta\in\Qlbarunits$ at $\overline u$, we will write $(\cO,\zeta):=(\cO,\cE)$. Similarly, if $(\mf L,\cO_0,\cE_0)\in\cM_\mf G$ and $A_\mf G(u)=\langle\overline u\rangle$ for $u\in\cO_0$, we set $(\mf L,\cO_0,\zeta):=(\mf L,\cO_0,\cE_0)$ if $\cE_0$ is parametrised by the irreducible character of $A_\mf L(u)$ which takes the value $\zeta$ at $\overline u$. (Note that this is independent of the choice of $u$ in either case, so we do not have to refer to a specific choice of $u\in\cO$, $u\in\cO_0$, respectively.)
\end{Rem}

\section{Parametrisations of unipotent characters and unipotent character sheaves}\label{SecParam}

Throughout this section, we assume that $\mf G$ is a simple algebraic group of adjoint type over $k=\Fpbar$ (for a prime $p$). We also assume that $F\colon\mf G\rightarrow\mf G$ is a Frobenius map with respect to an $\Fq$-rational structure on $\mf G$ (where $q$ is a power of $p$) such that $(\mf G, F)$ is of split type. The further notation is as in the introduction. We will summarise Lusztig's results showing that there are two \enquote{parallel} parametrisations of the unipotent characters of $\mf G^F$ and the unipotent character sheaves on $\mf G$, one in terms of Lusztig's families in the Weyl group as introduced in \cite[\S 4]{Luchars}, the other one via Harish-Chandra series. We mostly follow \cite[\S 3]{LuRCS}. Note that this is independent of the chosen $F$ (at least as long as $(\mf G, F)$ is of split type); see \cite{LuUchCatCen} for a conceptual explanation for this fact. At the end of this section, we provide precise descriptions of the characteristic functions of unipotent character sheaves and of the characteristic functions as they appear in \Cref{XiYi}, due to Lusztig \cite[\S 25]{LuCS5}, \cite[\S 24]{LuCS5}, respectively. This will be an essential ingredient in the proof of Lusztig's conjecture, see \Cref{SecPf} below.

\begin{0}\label{UchGF}
For $w\in\mf W$, let $R_w\in\CF(\mf G^F)$ be the virtual character of Deligne--Lusztig, as defined in \cite[\S 1]{DL}. We denote by
\[\Uch(\mf G^F):=\{\rho\in\Irr(\mf G^F)\mid{\langle\rho,R_w\rangle}_{\mf G^F}\neq0\text{ for some }w\in\mf W\}\subseteq\Irr(\mf G^F)\]
the set of unipotent characters of $\mf G^F$. Furthermore, given $\phi\in\Irr(\mf W)$, let
\[R_\phi:=\frac{1}{|\mf W|}\sum_{w\in\mf W}\phi(w)R_w\in\CF(\mf G^F)\]
be the unipotent almost character corresponding to $\phi$. In \cite[\S 4]{Luchars}, Lusztig introduces parameter sets for the irreducible characters of $\mf G^F$. Since we assumed that $F$ defines a split rational structure on $\mf G$ and since we will only be concerned with unipotent characters, this description simplifies considerably, so let us indicate it here (without providing the exact definitions). There is a set $X(\mf W)$, together with a pairing
\[\{\;,\;\}\colon X(\mf W)\times X(\mf W)\rightarrow\Qlbar,\]
as well as an explicit embedding
\begin{equation}\label{IrrWXW}
\Irr(\mf W)\hookrightarrow X(\mf W),\fsep \phi\mapsto x_\phi,
\end{equation}
all of which only depend on $\mf W$ (and not on $p$ or $q$). By \cite[Main Theorem 4.23]{Luchars}, there is a bijection
\begin{equation}\label{ParUchXW}
X(\mf W)\xrightarrow{\sim}\Uch(\mf G^F),\fsep x\mapsto\rho_x,
\end{equation}
satisfying
\begin{equation}\label{MultUch}
{\langle\rho_x,R_\phi\rangle}_{\mf G^F}=\Delta(x)\{x,x_\phi\}\quad\text{for any }x\in X(\mf W),\;\phi\in\Irr(\mf W).
\end{equation}
(Here, $\Delta(x)$ is a certain sign attached to $x\in X(\mf W)$, see \cite[4.14]{Luchars}.)
Furthermore, for any $x\in X(\mf W)$, Lusztig sets
\begin{equation}\label{DefRx}
R_x:=\sum_{y\in X(\mf W)}\{y,x\}\Delta(y)\rho_y\in\CF(\mf G^F),
\end{equation}
which we will refer to as the unipotent almost character corresponding to $x\in X(\mf W)$. We have $R_{x_\phi}=R_\phi$ for all $\phi\in\Irr(\mf W)$. Since the \enquote{Fourier matrix} $(\{x,y\})_{x,y\in X(\mf W)}$ is hermitian (in fact, it is symmetric with our assumptions) and of order $2$, we get, for any $x\in X(\mf W)$:
\begin{equation}\label{UchRx}
\rho_x=\Delta(x)\sum_{y\in X(\mf W)}\{y,x\}R_y.
\end{equation}
Thus, with respect to the scalar product ${\langle\;,\;\rangle}_{\mf G^F}$, the almost characters $R_x$, $x\in X(\mf W)$, form an orthonormal basis of the subspace of $\CF(\mf G^F)$ generated by $\Uch(\mf G^F)$, and knowing the values of the $R_x$ is equivalent to knowing the values of the unipotent characters of $\mf G^F$.
\end{0}

\begin{0}
Recall the notions on character sheaves introduced in \Cref{IntroCS}. In addition to those, we need the following. For $\phi\in\Irr(\mf W)$, let
\[R_\phi^{\cL_0}:=\frac1{|\mf W|}\sum_{w\in\mf W}\phi(w)\sum_{i\in\Z}{{(-1)}^{i+\dim\mf G}}\, {\leftidx{^p}{\!H}{^i}(K_w^{\cL_0})},\]
see \cite[14.10]{LuCS3}. For any $\phi\in\Irr(\mf W)$ and $A\in\hat{\mf G}^{\mathrm{un}}$, let $(A:R_\phi^{\cL_0})$ be the multiplicity of $A$ in $R_\phi^{\cL_0}$ in the subgroup of the Grothendieck group of $\scM\mf G$ spanned by the character sheaves \cite[(14.10.4)]{LuCS3}. The set $\hat{\mf G}^{\mathrm{un}}$ can be parametrised by means of a scheme analogous to that for the unipotent characters in \Cref{UchGF}: There is a parametrisation
\begin{equation}\label{ParUCSXW}
X(\mf W)\xrightarrow{\sim}\hat{\mf G}^{\mathrm{un}},\fsep x\mapsto A_x,
\end{equation}
such that
\begin{equation}\label{MultUCS}
(A_x:R_\phi^{\cL_0})=\hat\varepsilon_{A_x}\{x,x_\phi\}\quad\text{for any }x\in X(\mf W)\text{ and any }\phi\in{\Irr(\mf W)}.
\end{equation}
\end{0}

\begin{0}\label{LuRCS}
The conditions \eqref{MultUch}, \eqref{MultUCS} do not completely determine the parametrisations \eqref{ParUchXW}, \eqref{ParUCSXW}, respectively, so one needs further information to uniquely specify a labelling of $\Uch(\mf G^F)$ and $\hat{\mf G}^{\mathrm{un}}$. This is achieved by considering Harish-Chandra series and suitable \enquote{eigenvalues} associated with elements of $\Uch(\mf G^F)$ and $\hat{\mf G}^{\mathrm{un}}$. It will be convenient for our purposes to follow the description in \cite[\S 3]{LuRCS}. Let $W$ be an irreducible Weyl group (or the trivial group $\{1\}$) with simple reflections $S\subseteq W$. With each such $(W,S)$ is associated a small (possibly empty) set $\mathfrak S_W^\circ$ which in most cases may be regarded as a subset of the set of roots of unity in $\Qlbar$, see \cite[3.1]{LuRCS}. For $J\subseteq S$, let $W_J=\langle J\rangle$ be the Coxeter group with Coxeter generators $J$. The definition of the sets $\mathfrak S_W^\circ$ (with $W$ irreducible or $\{1\}$) shows that, if $\mathfrak S_{W_J}^\circ\neq\varnothing$, then $(W_J,J)$ is necessarily simple or $\{1\}$. Furthermore (still assuming that $\mathfrak S_{W_J}^\circ\neq\varnothing$), by \cite[Theorem 5.9]{LuCoxFrob}, one obtains another Weyl group $W^{S/J}$, with simple reflections corresponding to the elements in $S\setminus J$. We have $W^{S/\varnothing}=W$ and $W^{S/S}=\{1\}$. Let
\[\mathfrak S_W:=\bigl\{(J,\phi,\zeta)\;\big|\; J\subseteq S,\;\phi\in\Irr(W^{S/J}),\;\zeta\in\mathfrak S_{W_J}^\circ\bigr\}.\]
The sets $\mathfrak S_W^\circ$ and $\Irr(W)$ canonically embed into $\mathfrak S_W$, via
\[\mathfrak S_W^\circ\hookrightarrow\mathfrak S_W,\fsep\zeta\mapsto(S,1,\zeta),\]
and
\begin{equation}\label{IrrWSW}
\Irr(W)\hookrightarrow\mathfrak S_W,\fsep\phi\mapsto(\varnothing,\phi,1),
\end{equation}
respectively. (We have $\mathfrak S_{\{1\}}^\circ=\{1\}$.)
\end{0}

\begin{0}\label{ParUchHC}
In the setting of \Cref{LuRCS}, let us now take $W=\mf W$. We will typically denote by $J$ a subset of the simple roots $\Pi$ rather than the corresponding subset of the simple reflections in $\mf W$; the definition of $\mf W_J$, $\mf W^{\Pi/J}$, ... is analogous to that in \Cref{LuRCS}. For any $J\subseteq\Pi$, let $\mf P_J=\mf B_0\mf W_J\mf B_0$ be the standard parabolic subgroup of $\mf G$, and let $\mf L_J\subseteq\mf P_J$ be the unique Levi complement of $\mf P_J$ which contains $\mf T_0$. Let us denote by ${\Uch(\mf L_J^F)}^\circ\subseteq\Uch(\mf L_J^F)$ the set of cuspidal unipotent characters of $\mf G^F$. The definition of $\mathfrak S_{\mf W_J}$ is designed to be such that it coincides with the set of eigenvalues of Frobenius $\lambda_\rho$ for $\rho\in{\Uch(\mf L_J^F)}^\circ\subseteq\Uch(\mf L_J^F)$, as defined in \cite[Chap.~11]{Luchars}. More precisely, up to a few exceptions (which we shall not explicitly be concerned with here), a cuspidal unipotent character on $\mf L_J^F$ is uniquely determined by its eigenvalue of Frobenius, and this gives the bijection
\[{\Uch(\mf L_J^F)}^\circ\xrightarrow{\sim}\mathfrak S_{\mf W_J}^\circ\]
(see \cite[3.2, 3.3]{LuRCS} for the details). For any $J\subseteq\Pi$, let
\[R_{\mf L_J}^{\mf G}\colon\CF(\mf L_J^F)\rightarrow\CF(\mf G^F)\]
be the Harish-Chandra induction (obtained by inflating class functions on $\mf L_J^F$ to $\mf P_J^F$ via the canonical map $\mf P_J^F\rightarrow\mf L_J^F$ followed by inducing them from $\mf P_J^F$ to $\mf G^F$). For any (cuspidal) unipotent character $\rho_0$ of $\mf L_J^F$, the irreducible characters of $\mf G^F$ which appear as constituents of $R_{\mf L_J}^{\mf G}(\rho_0)$ are in $\Uch(\mf G^F)$; conversely, given any $\rho\in\Uch(\mf G^F)$, there exists a unique $J\subseteq\Pi$ and a unique $\rho_0\in{\Uch(\mf L_J^F)}^\circ$ such that $\rho$ is a constituent of $R_{\mf L_J}^{\mf G}(\rho_0)$, see \cite{Lurepchev}. Thus, $\Uch(\mf G^F)$ is partitioned into Harish-Chandra series, one for each pair $(\mf L_J,\rho_0)$ with $J\subseteq\Pi$ such that $\mathfrak S_{\mf W_J}^\circ\neq\varnothing$, and with $\rho_0\in{\Uch(\mf L_J^F)}^\circ$. As soon as a square root of $p$ (and of $q$) is fixed, as in \eqref{sqrtp}, the irreducible characters of $\mf W^{\Pi/S}$ naturally parametrise the unipotent characters of $\mf G^F$ in the series with respect to $(\mf L_J,\rho_0)$, see \cite[8.6, 8.7]{Luchars}. Hence, we obtain a bijection
\[\Uch(\mf G^F)\xrightarrow{\sim}\mathfrak S_\mf W.\]
\end{0}

\begin{Rem}\label{FixParUch}
As mentioned in \Cref{LuRCS}, the bijection $X(\mf W)\xrightarrow{\sim}\Uch(\mf G^F)$, $x\mapsto\rho_x$ in \eqref{ParUchXW} is in general not completely determined by the requirement \eqref{MultUch}. However, in view of \cite[Proposition 6.4]{DMParam}, it can be chosen in such a way that it satisfies the following two additional conditions, and then it is uniquely determined:
\begin{enumerate}
\item[(i)] We have $\lambda_{\rho_x}=\tilde\lambda_x$ for all $x\in X(\mf W)$, where $\lambda_{\rho_x}\in\Qlbarunits$ is the eigenvalue of Frobenius of $\rho_x$ as defined in \cite[Chap.~11]{Luchars}, and where $\tilde\lambda_x$ is defined in a way analogous to $\lambda(\overline x)$ in \cite[p.~135]{DMParam}.
\item[(ii)] We have $\rho_{x_\phi}=\rho_\phi$ for all $\phi\in\Irr(\mf W)$, where $x_\phi\in X(\mf W)$ is the image of $\phi$ under \eqref{IrrWXW}, and where $\rho_\phi\in\Uch(\mf G^F)$ denotes the image of $\phi$ under \eqref{IrrWSW} with $W=\mf W$. (Thus, $\rho_\phi$ is the unipotent principal series character parametrised by $\phi$.)
\end{enumerate}
In fact, up to a few \enquote{exceptional} $\phi\in\Irr(\mf W)$, (ii) is automatically implied by \eqref{MultUch}, see \cite[Proposition 12.6]{Luchars}. The \enquote{exceptional} $\phi$ are taken care of by (i) and the definition of the $\tilde\lambda_x$. Finally, (i) removes all other possible ambiguities.
\end{Rem}

\begin{0}\label{ParUCSHC}
Just as for the unipotent characters, there is also a parametrisation of the unipotent character sheaves $\hat{\mf G}^{\mathrm{un}}$ in terms of \enquote{Harish-Chandra series}, as follows. First of all, to any $A\in\hat{\mf G}$ is associated a root of unity $\lambda_A\in\Qlbar$ which may be defined as in \cite[3.6]{LuRCS} (see also \cite{Eft} or \cite[Theorem 3.3]{Sh1}). Now let $\mf P\subseteq\mf G$ be any parabolic subgroup, and let $\mf L\subseteq\mf P$ be any Levi complement of $\mf P$. There is an induction functor
\begin{equation}\label{indFunctor}
\ind_{\mf L\subseteq\mf P}^{\mf G}\colon\{\mf L\text{-equivariant perverse sheaves on } \mf L\}\rightarrow\scD\mf G.
\end{equation}
see \cite[4.1]{LuCS1}. Let $A_0\in{\hat{\mf L}}^{\circ,\mathrm{un}}$ be a cuspidal unipotent character sheaf. By the results of \cite[\S 4]{LuCS1}, the induced complex $\ind_{\mf L\subseteq\mf P}^\mf G(A_0)\in\scM\mf G$ is a semisimple perverse sheaf which is a direct sum of unipotent character sheaves on $\mf G$. Furthermore, the definition of $\ind_{\mf L\subseteq\mf P}^\mf G(A_0)$ turns out to be independent of $\mf L$ \cite[Proposition 4.5]{LuIC}, so we may (and will) just denote it as $\ind_{\mf L}^\mf G(A_0)$ from now on. On the other hand, any $A\in\hat{\mf G}^{\mathrm{un}}$ is a constituent of $\ind_{\mf L_J}^{\mf G}(A_0)$ for some $J\subseteq\Pi$ and some $A_0\in{\hat{\mf L}_J}^{\circ,\mathrm{un}}$. By \cite[\S 3]{LuIC} and \cite[Lemma 5.9]{Sh1}, the endomorphism algebra of any such $\ind_{\mf L}^\mf G(A_0)$ is canonically isomorphic to the group algebra of $W_\mf G(\mf L_J)\cong \mf W^{\Pi/J}$ (again assuming the fixed choices of $\sqrt p$, $\sqrt q$ in \eqref{sqrtp}). This gives rise to a parametrisation of $\hat{\mf G}^{\mathrm{un}}$ analogous to that of $\Uch(\mf G^F)$ described in \Cref{ParUchHC}: For any $J\subseteq\Pi$ such that $\mathfrak S_{\mf W_J}^\circ$ is non-empty, there is a bijection
\[\hat{\mf L}_J^{\circ,\mathrm{un}}\xrightarrow{\sim}\mathfrak S_{\mf W_J}^\circ,\]
which in most cases is defined by identifying a given $A\in\hat{\mf L}_J^{\circ,\mathrm{un}}$ with the root of unity $\lambda_A$. (There are a few exceptions where $A\in\hat{\mf L}_J^{\circ,\mathrm{un}}$ is not uniquely determined by $\lambda_A$ \cite[Theorem 3.7]{LuRCS}, but we will not be explicitly concerned with those.) In this way, the unipotent character sheaves on $\mf G$ are parametrised via
\[\hat{\mf G}^{\mathrm{un}}\xrightarrow{\sim}\mathfrak S_\mf W.\]
\end{0}

\begin{Cor}\label{Parcompatible}
Let $X(\mf W)\xrightarrow{\sim}\Uch(\mf G^F),\; x\mapsto\rho_x$ be the unique bijection which satisfies \eqref{MultUch} and the conditions (i) and (ii) in \Cref{FixParUch}. Then, with the notation of \Cref{ParUchHC}, \Cref{ParUCSHC}, there exists a unique bijection
\[X(\mf W)\xrightarrow{\sim}\hat{\mf G}^{\mathrm{un}},\fsep x\mapsto A_x,\]
which satisfies the property \eqref{MultUCS} and such that in addition the following diagram commutes (where $\Irr(\mf W)\hookrightarrow X(\mf W)$ is the embedding \eqref{IrrWXW}, and $\Irr(\mf W)\hookrightarrow\mathfrak S_{\mf W}$ is the embedding \eqref{IrrWSW} with $W=\mf W$):
\begin{center}
\begin{tikzcd}[column sep=12em, row sep=2em]
& X(\mf W)\arrow[dr, "\sim"]\arrow[dl, "\sim"'] &
\\
\Uch(\mf G^F)\arrow[dr, "\sim"'] & \Irr(\mf W)\arrow[d, hook]\arrow[u,hook] & \hat{\mf G}^{\mathrm{un}}\arrow[dl, "\sim"]
\\
& \mathfrak S_{\mf W} &
\end{tikzcd}
\end{center}
\end{Cor}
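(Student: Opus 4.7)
The plan is to transfer the existence-and-uniqueness of the bijection $x \mapsto \rho_x$ from \Cref{FixParUch} to the character-sheaf side via Lusztig's correspondence between unipotent almost characters and characteristic functions of unipotent character sheaves. The first step is to recall from \cite[\S 25]{LuCS5} (building on \cite{LuCS3}) that for each $x \in X(\mf W)$ and each parametrisation \eqref{ParUCSXW} satisfying \eqref{MultUCS}, the character sheaf $A_x$ admits a canonical isomorphism $\varphi_x\colon F^\ast A_x \xrightarrow{\sim} A_x$ (fixed up to a root of unity controlled by $\lambda_{A_x}$) whose characteristic function $\chi_{A_x, \varphi_x}$ is, up to an explicit scalar, the unipotent almost character $R_x$. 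Through this dictionary, \eqref{MultUCS} is translated into \eqref{MultUch}, and via the induction functor \eqref{indFunctor} the Harish-Chandra decomposition of $\hat{\mf G}^{\mathrm{un}}$ (\Cref{ParUCSHC}) is matched with that of $\Uch(\mf G^F)$ (\Cref{ParUchHC}).

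For existence, starting from the fixed $x \mapsto \rho_x$, I would define $A_x$ to be the unique unipotent character sheaf whose canonical characteristic function equals $R_x$ (equivalently, whose pairing with $R_\phi^{\cL_0}$ matches the pairing of $\rho_x$ with $R_\phi$). Condition \eqref{MultUCS} then follows from \eqref{MultUch}. Commutativity of the diagram reduces to the case $x = x_\phi$: by \Cref{FixParUch}(ii), $\rho_{x_\phi} = \rho_\phi$ lies in the principal Harish-Chandra series labelled by $\phi$, and the matching of the two series structures forces $A_{x_\phi}$ to be the principal-series unipotent character sheaf corresponding to $\phi$, i.e.\ the image of $\phi$ under $\Irr(\mf W) \hookrightarrow \mathfrak S_{\mf W} \xrightarrow{\sim} \hat{\mf G}^{\mathrm{un}}$ demanded by the diagram.

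For uniqueness, suppose $x \mapsto A'_x$ is another bijection with the same properties. Diagram commutativity forces $A'_{x_\phi} = A_{x_\phi}$ for every $\phi \in \Irr(\mf W)$. Within each Lusztig family $\mathscr F \subseteq X(\mf W)$, I would then mimic the argument of \cite[Proposition 6.4]{DMParam}, now with unipotent character sheaves and their Frobenius eigenvalues $\lambda_A$ (from \Cref{ParUCSHC}) in place of unipotent characters and $\lambda_\rho$. The rigidity coming from the fact that $\lambda_{A_x}$ and $\lambda_{\rho_x}$ are linked through the characteristic-function dictionary, together with \eqref{MultUCS} and the already-fixed labels on the principal-series elements of $\mathscr F$, propagates $A'_x = A_x$ to all of $X(\mf W)$. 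The main obstacle is making the characteristic-function dictionary precise on the nose and verifying that, under it, the almost character sheaves $R_x^{\cL_0}$ correspond to the almost characters $R_x$, so that Harish-Chandra compatibility under \eqref{indFunctor} and the uniqueness device of \cite[Proposition 6.4]{DMParam} can be invoked in parallel on both sides.
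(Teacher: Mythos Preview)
You have significantly overcomplicated the argument by misreading what the commutativity of the diagram entails. The diagram is not merely asserting compatibility on the image of $\Irr(\mf W)$; it includes the outer square, i.e.\ the requirement that the composite $X(\mf W)\xrightarrow{\sim}\Uch(\mf G^F)\xrightarrow{\sim}\mathfrak S_{\mf W}$ agree with the composite $X(\mf W)\xrightarrow{\sim}\hat{\mf G}^{\mathrm{un}}\xrightarrow{\sim}\mathfrak S_{\mf W}$. Since the three bijections $X(\mf W)\xrightarrow{\sim}\Uch(\mf G^F)$, $\Uch(\mf G^F)\xrightarrow{\sim}\mathfrak S_\mf W$ and $\hat{\mf G}^{\mathrm{un}}\xrightarrow{\sim}\mathfrak S_\mf W$ have all been fixed already (in \Cref{FixParUch}, \Cref{ParUchHC}, \Cref{ParUCSHC} respectively), this outer square \emph{defines} the map $X(\mf W)\to\hat{\mf G}^{\mathrm{un}}$ outright, and uniqueness is immediate. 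Your appeal to \cite[Proposition~6.4]{DMParam} and to Frobenius eigenvalues is therefore unnecessary, as is the entire characteristic-function dictionary for existence.

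The paper's proof is accordingly three lines: uniqueness is trivial from the outer square; compatibility with the two embeddings of $\Irr(\mf W)$ follows from condition~(ii) in \Cref{FixParUch}; and the only substantive claim---that the bijection so defined also satisfies \eqref{MultUCS}---is simply cited from \cite[3.10]{LuRCS}. Your route through the identification of almost characters with characteristic functions of character sheaves invokes much heavier machinery (essentially Shoji's theorem), and as written your existence argument verifies only the $\Irr(\mf W)$ triangles, not the outer square, so it does not actually establish the full commutativity asserted in the statement.
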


\begin{proof}
Clearly, the requirement on the commutativity of the diagram printed in the corollary uniquely specifies the bijection . The commutativity of the diagram with respect to $\Irr(\mf W)$ follows from \Cref{FixParUch}. The fact that the bijection $X(\mf W)\xrightarrow{\sim}\hat{\mf G}^{\mathrm{un}}$ thus defined satisfies \eqref{MultUCS} is noted in \cite[3.10]{LuRCS}.
\end{proof}

\begin{Rem}
The relevance of \Cref{Parcompatible} for our purposes is that the parametrisation of $\hat{\mf G}^{\mathrm{un}}$ in terms of $X(\mf W)$ can now directly be read off from the parametrisations of unipotent characters of $\mf G^F$ in terms of $\mathfrak S_\mf W$ (that is, Harish-Chandra series) and $X(\mf W)$. Thus, we can just use the tables provided in the appendix of \cite{Luchars} when referring to the parametrisation of unipotent character sheaves on $\mf G$.
\begin{assumption}
From now on, we assume that the bijections $X(\mf W)\xrightarrow{\sim}\Uch(\mf G^F)$, $x\mapsto\rho_x$, and $X(\mf W)\xrightarrow{\sim}\hat{\mf G}^{\mathrm{un}}$, $x\mapsto A_x$, are chosen according to \Cref{Parcompatible}. In particular, for a given $x\in X(\mf W)$, whenever we write $\rho_x\in\Uch(\mf G^F)$ or $A_x\in\hat{\mf G}^{\mathrm{un}}$, we tacitly refer to this convention.
\end{assumption}
\end{Rem}

\begin{Rem}\label{AxAi}
Recall the notation of \Cref{LuIC}. Let us consider an element $\jj=(\mf L,\cO_0,\cE_0)\in\cM_\mf G$. Since the elements of $\cM_\mf G$ are defined only up to $\mf G$-conjugacy, we may assume that $\mf L=\mf L_J$ is the standard Levi subgroup of the standard parabolic subgroup $\mf P_J\subseteq\mf G$ for some $J\subseteq\Pi$. Let $\Sigma:={\mf Z(\mf L_J)}^\circ.\cO_0$ and $\cE:=1\boxtimes\cE_0$, so that $(\Sigma,\cE)$ is a cuspidal pair for $\mf L_J$ in the sense of \cite[2.4]{LuIC}. We also set
\[A_0:={\IC(\overline{\Sigma},\cE)[\dim\Sigma]}^{\#\mf L_J}.\]
By \cite[(7.1.4)]{LuCS2} combined with \cite[Theorem 23.1]{LuCS5} and the results of \cite{LuclCS}, $A_0$ is a cuspidal character sheaf on $\mf L_J$. Furthermore, by \cite[Proposition 4.5]{LuIC}, we have a canonical isomorphism
\[K_\jj\cong\ind_{\mf L_J}^{\mf G}(A_0).\]
Recall (\Cref{LuIC}) that the $A_\ii$ for $\ii\in\tau^{-1}(\jj)$ are defined as simple direct summands of $K_\jj$. On the other hand, by \cite[Proposition 4.8]{LuCS1}, the simple direct summands of $\ind_{\mf L_J}^{\mf G}(A_0)$ are character sheaves on $\mf G$, and $A_0$ is a \emph{unipotent} character sheaf on $\mf L_J$ if and only if one (or, equivalently, any) of these simple direct summands is a \emph{unipotent} character sheaf on $\mf G$, so the analogous statement holds for the $A_\ii$ with $\ii\in\tau^{-1}(\jj)$. Let us now assume that $A_0$ is a unipotent character sheaf on $\mf L_J$. Hence, for any $\ii\in\tau^{-1}(\jj)$, $A_\ii$ is a unipotent character sheaf on $\mf G$, so there exists some $x\in X(\mf W)$ such that $A_\ii\cong A_x$. Comparing the descriptions in \Cref{LuIC}, \Cref{ParUCSHC} for the parametrisations of the simple constituents of $K_\jj$, $\ind_{\mf L_J}^{\mf G}(A_0)$, respectively, we see that they are entirely analogous. Thus, if $\ii$ is the image of $\phi\in\Irr(\scW_\jj)$ under \eqref{GenSpring}, then $A_x$ corresponds to $(J,\phi,\lambda_{A_0})\in\mathfrak S_\mf W$ in \Cref{ParUCSHC}, with the canonical identification $\scW_\jj\cong\mf W^{\Pi/J}$. In other words, considering the block $\tau^{-1}(\jj)$ with $\jj$ as above (so that the associated $A_0$ is unipotent), the problem of explicitly determining the bijection
\[\Irr(\scW_\jj)\xrightarrow{\sim}\tau^{-1}(\jj)\]
is equivalent to solving the following problem:
\begin{equation}\label{ReformGenSpring}
\text{For any }\ii\in\tau^{-1}(\jj),\text{ find the }x\in X(\mf W)\text{ such that }A_x\cong A_\ii.\tag{$\diamondsuit$}
\end{equation}
\end{Rem}

\begin{0}\label{CharFctNorm1}
Let $x\in X(\mf W)$, and let us consider the corresponding unipotent character sheaf $A_x\in\hat{\mf G}^{\mathrm{un}}$. Since we are assuming that $(\mf G,F)$ is of split type, it is known that $A_x$ is automatically $F$-stable. (This follows, for example, from \cite[\S 3]{LuRCS} or \cite{LuUchCatCen}; it is also implicitly contained in \cite[\S 5]{Sh1}, \cite[\S 4]{Sh2}.) Let $\varphi\colon F^\ast A_x\xrightarrow{\sim}A_x$ be an isomorphism. As $A_x$ is $\mf G$-equivariant for the conjugation action, the resulting characteristic function $\chi_{A_x,\varphi}$ lies in $\CF(\mf G^F)$. Here, $\varphi$ (and, hence, $\chi_{A_x,\varphi}$) is uniquely defined up to multiplication with a scalar in $\Qlbarunits$; however, explicitly specifying an isomorphism $\varphi$ appears to be a very difficult problem. In \cite[25.1]{LuCS5}, Lusztig formulates a condition which determines an isomorphism $F^\ast A_x\xrightarrow{\sim}A_x$ up to multiplication with a root of unity. So let us assume that $\varphi_x\colon F^\ast A_x\xrightarrow{\sim}A_x$ satisfies this requirement. Then, by the results of \cite[\S 25]{LuCS5}, the corresponding characteristic function $\chi_x:=\chi_{A_x,\varphi_x}\colon\mf G^F\rightarrow\Qlbar$ has norm $1$ with respect to the scalar product ${\langle\;,\;\rangle}_{\mf G^F}$; moreover, assuming that for any $x\in X(\mf W)$ a choice for $\varphi_x$ is made as above, the set of functions $\{\chi_x\mid x\in X(\mf W)\}$ is an orthonormal basis of the subspace of $\CF(\mf G^F)$ that it generates.
\end{0}

\begin{0}\label{CharFctLuAlg}
Let us consider an element $\ii\in\cN_\mf G^F$. We describe the isomorphism $\varphi_\ii\colon F^\ast A_\ii\xrightarrow{\sim}A_\ii$ in \Cref{XiYi}, following \cite[24.2]{LuCS5}. As in \Cref{AxAi}, we can represent $\jj=\tau(\ii)\in\cM_\mf G^F$ by a triple $(\mf L_J,\cO_0,\cE_0)$ for some $J\subseteq\Pi$. Thus, we have $F(\mf L_J)=\mf L_J$, and then also $F(\cO_0)=\cO_0$. Furthermore, $F$ acts trivially on $\scW_\jj=W_\mf G(\mf L_J)$. By \cite[\S 9]{LuIC}, there is a canonical basis $\theta_w$ ($w\in\scW_\jj$) for the endomorphism algebra $\scA_\jj=\End_{\scM\mf G}(K_\jj)$, and we have an isomorphism
\[\Qlbar[\scW_\jj]\xrightarrow{\sim}\scA_\jj,\fsep w\mapsto\theta_w.\]
Now $V_{A_\ii}:=\Hom_{\scM\mf G}(A_\ii,K_\jj)$ is an irreducible left $\scA_\jj$-module via the obvious composition of morphisms. By \cite[24.2]{LuCS5}, we can choose an isomorphism $\varphi_0\colon F^\ast\cE_0\xrightarrow{\sim}\cE_0$ which induces a map of finite order at the stalk of $\cE_0$ at any element of $\cO_0^F$, and such a choice determines an isomorphism $\varphi_\jj\colon F^\ast K_\jj\xrightarrow{\sim}K_\jj$. Let us first consider any isomorphism $\tilde\varphi_\ii\colon F^\ast A_\ii\xrightarrow{\sim}A_\ii$. This gives rise to a bijective linear map
\[\sigma_{\tilde\varphi_\ii}\colon V_{A_\ii}\rightarrow V_{A_\ii},\fsep v\mapsto\varphi_\jj\circ F^\ast(v)\circ\tilde\varphi_\ii^{-1}\]
which satisfies $\tilde\sigma_{A_\ii}\circ\theta_w=\theta_w\circ\tilde\sigma_{A_\ii}$ for all $w\in\scW_\jj$. (Here, for $v\colon A_\ii\rightarrow K_\jj$ in $\scM\mf G$, $F^\ast(v)$ denotes the induced map $F^\ast A_\ii\rightarrow F^\ast K_\jj$.) By Schur's lemma, there exists some $\zeta_\ii\in\Qlbarunits$ such that $\tilde\sigma_{\varphi_\ii}=\zeta_\ii\cdot\id_{V_{A_\ii}}$. Thus, setting $\varphi_\ii:=\zeta_\ii\tilde\varphi_\ii$, we have $\sigma_{\varphi_\ii}=\id_{V_{A_\ii}}$, and this requirement uniquely determines the isomorphism $\varphi_\ii\colon F^\ast A_\ii\xrightarrow{\sim}A_\ii$.
\end{0}

\begin{Rem}\label{FixIsoE0}
In the setting of \Cref{CharFctLuAlg}, we see that the isomorphism $\varphi_\ii\colon F^\ast A_\ii\xrightarrow{\sim}A_\ii$ is uniquely determined as soon as $\varphi_0\colon F^\ast\cE_0\xrightarrow{\sim}\cE_0$ is chosen. So it will be convenient for the further discussion to be able to refer to (any) fixed choices for these isomorphisms:
\begin{assumption}
For any $\jj=(\mf L_J,\cO_0,\cE_0)\in\cM_\mf G^F$, we assume to have fixed an isomorphism $\varphi_0\colon F^\ast\cE_0\xrightarrow{\sim}\cE_0$ which induces a map of finite order at the stalk of $\cE_0$ at any element of $\cO_0^F$.
\end{assumption}
Now let us assume that $A_\ii$ is a unipotent character sheaf on $\mf G$, so that there exists some $x\in X(\mf W)$ for which $A_x\cong A_\ii$. Then the isomorphism $\varphi_x$ in \Cref{CharFctNorm1} does in general not coincide with the isomorphism $\varphi_\ii$ defined in \Cref{CharFctLuAlg} (not even by multiplication with a root of unity)! Hence, the same holds for the associated characteristic functions.
\end{Rem}

\begin{Cor}\label{ChixXi}
Let $\ii=(\cO,\cE)\in\cN_\mf G^F$, and assume that $A_\ii$ is a unipotent character sheaf on $\mf G$. Let $x\in X(\mf W)$ be such that $A_x$ is isomorphic to $A_\ii$. Let $\jj=\tau(\ii)=(\mf L_J,\cO_0,\cE_0)\in\cM_\mf G^F$. Then there exists a unique isomorphism $\varphi_x\colon F^\ast A_x\xrightarrow{\sim}A_x$ as in \Cref{CharFctNorm1}, such that for any $u\in\mf G^F_{\mathrm{uni}}$, we have
\[\chi_{A_x,\varphi_{A_x}}(u)=q^{e_\ii}X_\ii(u)\]
where
\[e_\ii=\frac12(\dim\mf G-\dim\cO-\dim{\mf Z(\mf L)}^\circ).\]
\end{Cor}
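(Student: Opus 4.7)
The key tool is Schur's lemma applied to the isomorphism class of the character sheaf $A_x \cong A_\ii$: any two isomorphisms $F^\ast A \xrightarrow{\sim} A$ (for a fixed representative $A$) differ by a scalar in $\Qlbarunits$. My plan is to transport $\varphi_\ii$ to an isomorphism of $A_x$, recognise the resulting characteristic function as a scalar multiple of $X_\ii$ on unipotent elements, and then rescale in the unique way that yields a valid $\varphi_x$ as in \Cref{CharFctNorm1}.

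First I would fix an isomorphism $\iota\colon A_x \xrightarrow{\sim} A_\ii$ in $\scM\mf G$ and set $\tilde\varphi := \iota^{-1} \circ \varphi_\ii \circ F^\ast(\iota)\colon F^\ast A_x \xrightarrow{\sim} A_x$. By functoriality of characteristic functions, $\chi_{A_x, \tilde\varphi} = \chi_{A_\ii, \varphi_\ii}$ on $\mf G^F$, so the definition of $X_\ii$ in \Cref{XiYi} gives
\[\chi_{A_x, \tilde\varphi}(u) = (-1)^{a_\ii} q^{d_\ii} X_\ii(u) \quad \text{for } u \in \mf G^F_{\mathrm{uni}}.\]
A direct computation from the formulae for $a_\ii$, $b_\ii$, $d_\ii$, and $e_\ii$ shows that $e_\ii - d_\ii = \tfrac12(\dim\mf G - b_\ii)$, so setting $\varphi_x := (-1)^{a_\ii} q^{e_\ii - d_\ii}\, \tilde\varphi$ forces $\chi_{A_x, \varphi_x}(u) = q^{e_\ii} X_\ii(u)$ on $\mf G^F_{\mathrm{uni}}$, which is the asserted equation.

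The main obstacle is to verify that this specific $\varphi_x$ is admissible in the sense of \Cref{CharFctNorm1}. Among all scalar multiples $\alpha\tilde\varphi$, the admissible ones form a single orbit under the group of roots of unity, since Lusztig's requirement determines $|\alpha|$ via the norm-one condition $\|\chi_{A_x, \alpha\tilde\varphi}\|_{\mf G^F} = 1$; hence the claim reduces to the identity $\|\chi_{A_\ii, \varphi_\ii}\|_{\mf G^F}^2 = q^{b_\ii - \dim\mf G}$. I expect this identity to follow from the description (summarised in \Cref{AxAi}) of $A_\ii$ as a simple direct summand of $K_\jj \cong \ind_{\mf L_J}^{\mf G}(A_0)$, together with the orthogonality relations for characteristic functions of parabolically induced complexes established in \cite{LuCS1,LuCS2,LuCS5} and the fact that $\varphi_0$ acts with finite order at the cuspidal stalks (which pins down the norm of the cuspidal characteristic function on $\mf L_J^F$ and propagates through induction to the claimed norm on $\mf G^F$).

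Uniqueness of $\varphi_x$ is then immediate: two admissible choices satisfying the displayed equation differ by a root of unity $\zeta$, and $(\zeta-1)\, q^{e_\ii} X_\ii(u) = 0$ for every $u \in \mf G^F_{\mathrm{uni}}$ forces $\zeta = 1$, because $X_\ii|_{\cO^F} = Y_\ii|_{\cO^F}$ by the unitriangularity noted in \Cref{CorLusztigAlgorithm}, and $Y_\ii$ is the character of a finite-order automorphism of the non-zero local system $\cE$ on the non-empty set $\cO^F$, hence not identically zero.
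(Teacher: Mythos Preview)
Your construction of $\varphi_x$ via the explicit scalar $(-1)^{a_\ii}q^{e_\ii-d_\ii}$ is correct and matches the paper's answer, but your justification that this $\varphi_x$ satisfies the requirement of \Cref{CharFctNorm1} has a genuine gap. You argue that admissibility is detected by the norm condition $\|\chi_{A_x,\alpha\tilde\varphi}\|_{\mf G^F}=1$, and then reduce to checking $\|\chi_{A_\ii,\varphi_\ii}\|_{\mf G^F}^2=q^{b_\ii-\dim\mf G}$. The problem is that Lusztig's condition in \cite[25.1]{LuCS5} determines $\varphi_x$ up to a \emph{root of unity}, whereas the norm condition only pins down the \emph{absolute value} of the scalar (with respect to the chosen isomorphism $\Qlbar\simeq\C$). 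Since $|\alpha|=1$ does not imply that $\alpha$ is a root of unity, verifying the norm identity does not show that your $\varphi_x$ lies in the admissible orbit. In other words, you have established that your scalar has the correct modulus, but not that it differs from an admissible scalar by a root of unity.

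The paper closes this gap by going in the opposite direction and using the actual content of Lusztig's condition rather than its norm consequence. It starts from an admissible $\tilde\varphi_x$ and exploits the description in \Cref{CharFctLuAlg}: the map $\sigma_{\tilde\varphi_x}$ on $V_{A_x}$ is, by \cite[p.~153]{LuCS5}, equal to $q^{-(\dim\mf G-b_\ii)/2}$ times a map of finite order, while $\sigma_{\varphi_\ii}=\id$ by construction. Comparing these two forces $\tilde\varphi_x=\varpi_x\varphi_\ii$ with $\varpi_x=q^{(\dim\mf G-b_\ii)/2}\cdot(\text{root of unity})$, so the correction from $\varphi_\ii$ to any admissible $\varphi_x$ is precisely $q^{e_\ii-d_\ii}$ times a root of unity. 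This is exactly what is needed, and it is stronger than the norm statement you invoke. Your uniqueness argument, on the other hand, is fine and slightly more explicit than the paper's.
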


\begin{proof}
We start with any isomorphism $\tilde\varphi_x\colon F^\ast A_x\xrightarrow{\sim}A_x$ as in \Cref{CharFctNorm1}. We have to compare such $\tilde\varphi_x$ with the isomorphism $\varphi_\ii\colon F^\ast A_\ii\xrightarrow{\sim}A_\ii$ defined in \Cref{CharFctLuAlg}. Recall from \Cref{CharFctLuAlg} that the fixed choice of $\varphi_0$ determines an isomorphism $\varphi_\jj\colon F^\ast K_\jj\xrightarrow{\sim}K_\jj$. As we have seen in \Cref{AxAi}, $A_x\cong A_\ii$ is a simple direct summand of $K_\jj$, so $V_{A_x}=\Hom_{\scM\mf G}(A_x,K_\jj)\cong V_{A_\ii}$ is an irreducible left $\scA_\jj$-module. Just as we did for $\varphi_{A_\ii}$, we may thus consider the bijective linear map
\[\sigma_{\tilde\varphi_x}\colon V_{A_x}\rightarrow V_{A_x},\fsep v\mapsto\varphi_\jj\circ F^\ast(v)\circ\tilde\varphi_x^{-1}.\]
As noted in \cite[p.~153]{LuCS5}, $\sigma_{\tilde\varphi_x}$ is equal to $q^{-(\dim\mf G-\dim\supp A_x)/2}$ times a map of finite order. We also know that there exists \emph{some} $\varpi_x\in\Qlbarunits$ for which $\tilde\varphi_x=\varpi_x\varphi_\ii$, so we get
\[\sigma_{\tilde\varphi_x}=\varpi_x^{-1}\sigma_{\varphi_\ii}=\varpi_x^{-1}\id_{V_{A_\ii}}.\]
Hence, $\delta_x:=q^{(\dim\mf G-\dim\supp A_x)/2}\varpi_x^{-1}$ must be a root of unity. So we get, for any $u\in\mf G^F_{\mathrm{uni}}$:
\[\chi_{A_x,\tilde\varphi_x}(u)=\varpi_x\chi_{A_\ii,\varphi_\ii}(u)=(-1)^{a_\ii}\delta_x^{-1}q^{(\dim\mf G-\dim\cO-\dim{\mf Z(\mf L)}^\circ)/2}X_\ii(u).\]
Then $\varphi_x:=(-1)^{a_\ii}\delta_x\tilde\varphi_x\colon F^\ast A_x\xrightarrow{\sim}A_x$ is the unique isomorphism for which the corollary holds. (Note that, as $(-1)^{a_\ii}\delta_x\in\Qlbarunits$ is a root of unity, the choice of $\varphi_x$ is in accordance with \Cref{CharFctNorm1}.)
\end{proof}

\begin{Rem}
Let $x\in X(\mf W)$ and $\ii\in\cN_\mf G^F$ be such that $A_x\cong A_\ii$. Note that the only choice that is involved to specify the isomorphisms $\varphi_\ii$ and $\varphi_x$ by means of \Cref{CharFctLuAlg} and \Cref{ChixXi} is the one of an isomorphism $F^\ast\cE_0\xrightarrow{\sim}\cE_0$ as in \Cref{FixIsoE0}. While both $\varphi_\ii$ and $\varphi_x$ will indeed change if we vary the choice of $\varphi_0\colon F^\ast\cE_0\xrightarrow{\sim}\cE_0$, the interrelation between $\varphi_\ii$ and $\varphi_x$ via \Cref{CharFctLuAlg} and \Cref{ChixXi} will not.
\end{Rem}

\section{Groups of type \texorpdfstring{$\dt E_8$}{E8} in characteristic \texorpdfstring{$3$}{3} and Lusztig's conjecture}\label{SecWG2}

From now until the end of the paper, we assume that $\mf G$ is the simple group of type $\dt E_8$ over $k=\overline{\mathbb F}_3$ and that $F\colon\mf G\rightarrow\mf G$ is the Frobenius map with respect to an $\Fq$-rational structure on $\mf G$, where $q=3^f$ for some $f\geqslant1$. Thus, $\mf G^F=\dt E_8(q)$ is necessarily the (untwisted) group $\dt E_8(3^f)$. The further notational conventions for this $\mf G$ and $F$ are as in the introduction; in addition, we use the following: We write $\Pi=\{\alpha_1,\alpha_2,\ldots,\alpha_8\}\subseteq\Phi$, labelled in such a way that the Dynkin diagram of $\mf G$ is given by
\begin{center}
\begin{tikzpicture}
    \draw (-1.25,-0.25) node[anchor=east]  {$\dt E_8$};

    \node[bnode,label=above:$\alpha_1$] 			(1) at (0,0) 	{};
    \node[bnode,label=right:$\alpha_2$] 			(2) at (2,-1) 	{};
    \node[bnode,label=above:$\alpha_3$] 			(3) at (1,0) 	{};
    \node[bnode,label=above:$\alpha_4$] 			(4) at (2,0) 	{};
    \node[bnode,label=above:$\alpha_5$] 			(5) at (3,0) 	{};
    \node[bnode,label=above:$\alpha_6$] 			(6) at (4,0) 	{};
	\node[bnode,label=above:$\alpha_7$] 			(7) at (5,0) 	{};
	\node[bnode,label=above:$\alpha_8$] 			(8) at (6,0) 	{};

    \path 	(1) edge[thick, sedge] (3)
          	(3) edge[thick, sedge] (4)
          	(4) edge[thick, sedge] (5)
			(4)	edge[thick, sedge] (2)
          	(5) edge[thick, sedge] (6)
			(6) edge[thick, sedge] (7)
			(7) edge[thick, sedge] (8);
\end{tikzpicture}
\end{center}
Viewing $\mf W$ as a reflection group associated to the root system $\Phi$, we denote by $s_i\in\mf W$ the reflection corresponding to the simple root $\alpha_i\in\Pi$. Thus, $\mf W$ is a Coxeter group with Coxeter generators $s_1,s_2,\ldots, s_8$, so that the products $s_1s_3$, $s_3s_4$, $s_2s_4$, $s_4s_5$, $s_5s_6$, $s_6s_7$, $s_7s_8$ have order $3$ while the product of any other two different Coxeter generators is of order $2$. Our notation for the unipotent conjugacy classes of $\mf G$ is the same as the one used by Spaltenstein \cite{Sp}.

\begin{0}\label{HCE8}
We recall the notation of \Cref{ParUchHC}. By the table in \cite[pp.~366--370]{Luchars} or \cite[pp.~484--488]{C}, we have $|X(\mf W)|=|\mathfrak S_\mf W|=166$. The set $\mathfrak S_{\mf W_J}^\circ$ is non-empty for the following subsets $J\subseteq\Pi$: $J=\varnothing$, $J=\{\alpha_2,\alpha_3,\alpha_4,\alpha_5\}$, $J=\{\alpha_1,\alpha_2,\ldots,\alpha_6\}$, $J=\{\alpha_1,\alpha_2,\ldots,\alpha_7\}$ and $J=\Pi$. Thus, the $166$ elements of $\mathfrak S_\mf W$ fall into the following Harish-Chandra series.
\begin{enumerate}
\item[(a)] For $J=\varnothing$, we have $112$ elements in the principal series, that is, they are in the image of the embedding $\Irr(\mf W)\hookrightarrow\mathfrak S_\mf W$, $\phi\mapsto(\varnothing,\phi,1)$.
\item[(b)] Let $J=\{\alpha_2,\alpha_3,\alpha_4,\alpha_5\}\subseteq\Pi$, so that the group $\modulo{\mf L_J}{{\mf Z(\mf L_J)}^\circ}$ is simple of type $\dt D_4$. We have $\mathfrak S_{\mf W_J}^\circ=\{-1\}$, and the relative Weyl group $\mf W^{\Pi/J}=W_\mf G(\mf L_J)$ is isomorphic to $W(\dt F_4)$. Thus, $\mf W^{\Pi/J}$ has $25$ irreducible characters, so there are $25$ elements in $\mathfrak S_\mf W$ of the form $(J,\phi,-1)$, $\phi\in\Irr(\mf W^{\Pi/J})$.
\item[(c)] Let $J=\{\alpha_1,\ldots,\alpha_6\}\subseteq\Pi$, so that the group $\modulo{\mf L_J}{{\mf Z(\mf L_J)}^\circ}$ is simple of type $\dt E_6$. We have $\mathfrak S_{\mf W_J}^\circ=\{\zeta_3,\zeta_3^2\}$ where $\zeta_3\in\Qlbarunits$ is a primitive $3$rd root of unity. The relative Weyl group $\mf W^{\Pi/J}=W_\mf G(\mf L_J)$ is isomorphic to $W(\dt G_2)\cong D_{12}$, the dihedral group of order $12$. Thus, $\mf W^{\Pi/J}$ has $6$ irreducible characters, so there are $6$ elements in $\mathfrak S_\mf W$ of the form $(J,\phi,\zeta_3)$, and $6$ elements of the form $(J,\phi,\zeta_3^2)$ (where $\phi\in\Irr(\mf W^{\Pi/J})$), in total $12$ elements.
\item[(d)] Let $J=\{\alpha_1,\ldots,\alpha_7\}\subseteq\Pi$, so that the group $\modulo{\mf L_J}{{\mf Z(\mf L_J)}^\circ}$ is simple of type $\dt E_7$. We have $\mathfrak S_{\mf W_J}^\circ=\{\I,-\I\}$ where $\I\in\Qlbarunits$ is a primitive $4$th root of unity. The relative Weyl group $\mf W^{\Pi/J}=W_\mf G(\mf L_J)$ is isomorphic to $W(\dt A_1)\cong\modulo{\Z}{2\Z}$. Thus, $\mf W^{\Pi/J}$ has $2$ irreducible characters $\pm1$, and $(J,\pm1,\pm\I)$ give rise to $4$ elements of $\mathfrak S_\mf W$.
\item[(e)] For $J=\Pi$, the set $\mathfrak S_{\mf W_J}^\circ=\mathfrak S_\mf W^\circ$ consists of $13$ elements, that is, $\mf G^F$ has $13$ cuspidal unipotent characters (and $\mf G$ has $13$ cuspidal unipotent character sheaves).
\end{enumerate}
\end{0}

\begin{0}\label{NGE8p3}
We now describe the set $\cN_\mf G$, following Lusztig \cite[15.3]{LuIC}. As the elements of $\cM_\mf G$ are defined only up to $\mf G$-conjugacy, we can (and will) always represent $\jj\in\cM_\mf G$ as an element of the form $\jj=(\mf L_J,\cO_0,\cE_0)$ for some $J\subseteq\Pi$. Note that, in contrast to the description of $\mathfrak S_\mf W$ in \Cref{HCE8}, the set $\cN_\mf G$ depends on the characteristic of $k$! With our assumption that $p=3$, we have $|\cN_\mf G|=127$, and the set $\cN_\mf G$ is partitioned into the following blocks $\tau^{-1}(\jj)$, $\jj\in\cM_\mf G$.
\begin{enumerate}
\item[(a)] Let $J=\varnothing$. There is a unique cuspidal pair on $\mf T_0$ in the sense of \cite[2.4]{LuIC}, namely, $(\mf T_0,\Qlbar)$. Thus, $\jj=(\mf T_0,\{1\},1)$ is uniquely determined by $J$, and it gives rise to the \enquote{Springer block} $\tau^{-1}(\jj)$.  We have $\scW_\jj=W_\mf G(\mf T_0)=\mf W$, and this group has $112$ irreducible characters. In view of \eqref{GenSpring}, we have $112$ elements in $\tau^{-1}(\jj)$ as well.
\item[(c)] Let $J=\{\alpha_1,\alpha_2,\ldots,\alpha_6\}$. According to \cite[\S 15]{LuIC}, there are two possibilities for $(\cO_0,\cE_0)$ such that $(\mf L_J,\cO_0,\cE_0)\in\cM_\mf G$. The relative Weyl group $\scW_\jj\cong W(\dt G_2)$ is the dihedral group of order $12$, so it has $6$ irreducible characters. Thus, each of the two $\jj$ above gives rise to $6$ elements of $\tau^{-1}(\jj)$, in total we get $12$ such elements of $\cN_\mf G$.
\item[(e)] Let $J=\Pi$. There are $3$ elements in $\cM_\mf G$ of the form $(\mf G,\cO_0,\cE_0)$; these are just the ones where $(\cO_0,\cE_0)$ is a cuspidal pair for $\mf G$ itself, and we obtain $3$ elements of $\cN_\mf G$ in this way.
\end{enumerate}
Note that in (a), the cuspidal character sheaf $A_0$ on $\mf T_0$ associated to $(\mf T_0,\{1\},1)$ in the setting of \Cref{AxAi} is just the constant local system $\cL_0=\Qlbar$ on $\mf T_0$, so it is in $\hat{\mf T}_0^{\mathrm{un}}$. The discussion in \Cref{AxAi} thus shows that we have
\[\{A_\ii\mid\ii\in\tau^{-1}(\jj)\}=\{A_x\mid x=x_\phi\text{ for some }\phi\in\Irr(\mf W)\}.\] Matching the elements in these two sets is the same as explicitly determining the ordinary Springer correspondence \eqref{OrdSpring}. This has been achieved by Spaltenstein \cite{Sp}. At the other extreme, let us consider the case (e). This tells us that there are $3$ cuspidal pairs $(\cO_0,\cE_0)$ for the group $\mf G$ itself, giving rise to $3$ cuspidal character sheaves on $\mf G$. These are just the complexes $K_\jj$ themselves, with $\jj$ of the form $(\mf G,\cO_0,\cE_0)$ as above, and we have $K_\jj\cong A_{(\cO_0,\cE_0)}$. So the set $\tau^{-1}(\jj)=\{(\cO_0,\cE_0)\}\subseteq\cN_\mf G$ is a singleton, the group $\scW_\jj$ is the trivial one, and, of course, determining the generalised Springer correspondence for these $\jj$ is evident. So it remains to consider the two cases where $J$ is as in (c), and this is what the rest of this paper will be concerned with.
\end{0}

\begin{Cor}\label{AiUnip}
For any $\ii\in\cN_\mf G$, the character sheaf $A_\ii$ is unipotent, that is, there exists some $x\in X(\mf W)$ such that $A_x$ is isomorphic to $A_\ii$. In particular, we have $\cN_\mf G=\cN_\mf G^F$. 
\end{Cor}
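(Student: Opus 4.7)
The plan is to invoke \Cref{AxAi}, which says that for any $\jj=(\mf L_J,\cO_0,\cE_0)\in\cM_\mf G$ the simple summands $A_\ii$ ($\ii\in\tau^{-1}(\jj)$) are unipotent character sheaves on $\mf G$ if and only if the associated cuspidal character sheaf $A_0$ on $\mf L_J$ is a unipotent character sheaf. It therefore suffices to verify the latter for each of the three types of $\jj$ enumerated in \Cref{NGE8p3}.

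For $J=\varnothing$, we have $\mf L_J=\mf T_0$ and $A_0=\Qlbar[\dim\mf T_0]$, which is a direct summand of $K_1^{\cL_0}$ (taking $w=1$ and $\cL_0=\Qlbar$); hence $A_0$ is unipotent by the very definition recalled in \Cref{IntroCS}. For $J=\{\alpha_1,\ldots,\alpha_6\}$, the derived subgroup of $\mf L_J$ is simply-connected of type $\dt E_6$ (note that for $\dt E_8$ the simply-connected and adjoint forms coincide), and I would identify the two cuspidal pairs of \Cref{NGE8p3}(c) with the two cuspidal unipotent character sheaves on $\mf L_J$ whose eigenvalues of Frobenius are $\zeta_3$ and $\zeta_3^2$, matching exactly the entry $\mathfrak S_{\mf W_J}^\circ=\{\zeta_3,\zeta_3^2\}$ in \Cref{HCE8}(c). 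For $J=\Pi$, we have $\mf L_J=\mf G$, and since $\mf Z(\mf G)=\{1\}$ the three cuspidal pairs $(\cO_0,\cE_0)$ are supported on closures of unipotent classes; the corresponding character sheaves realise three of the thirteen cuspidal unipotent character sheaves on $\mf G$ listed in \Cref{HCE8}(e). In each case the check amounts to comparing Lusztig's classification of cuspidal pairs in \cite[\S 15]{LuIC} with the standard tables of cuspidal unipotent character sheaves.

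Granting these three verifications, every $A_\ii$ with $\ii\in\cN_\mf G$ is unipotent. The second claim $\cN_\mf G=\cN_\mf G^F$ then follows formally from two ingredients: the map $\ii\mapsto A_\ii$ is injective by the generalised Springer correspondence \eqref{GenSpring} and it is $F$-equivariant, in the sense that $A_{F(\ii)}\cong F^\ast A_\ii$ (applying $F^\ast$ to the IC complex defining $A_\ii$ produces precisely the one associated with $F(\ii)=(F^{-1}(\cO),F^\ast\cE)$); and every unipotent character sheaf on $\mf G$ is automatically $F$-stable, since $(\mf G,F)$ is of split type (\Cref{CharFctNorm1}). Consequently $F^\ast A_\ii\cong A_\ii$ and injectivity force $F(\ii)=\ii$, giving $\ii\in\cN_\mf G^F$.

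The main obstacle I anticipate is the case analysis in the second paragraph — specifically, identifying Lusztig's characteristic-$3$ cuspidal pairs on the $\dt E_6$-Levi and on $\mf G$ itself with unipotent cuspidal character sheaves. Conceptually this must succeed because each such $A_0$ is built from the local system $1\boxtimes\cE_0$, whose first factor is the trivial local system on the central torus $\mf Z(\mf L_J)^\circ$, so $A_0$ has trivial central character and is therefore unipotent; but pinning this down explicitly in characteristic~$3$ still requires reading off the relevant tables.
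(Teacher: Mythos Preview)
Your approach is the paper's: reduce via \Cref{AxAi} to showing that the cuspidal $A_0$ on each $\mf L_J$ is unipotent, and then treat the three cases of \Cref{NGE8p3} separately; the deduction of $\cN_\mf G=\cN_\mf G^F$ from $F$-stability of unipotent character sheaves is likewise the same (the paper phrases it via \eqref{AphiGuni}, which encodes exactly the injectivity you invoke).

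For the gap you anticipate, the paper does not read off tables or rely on a central-character heuristic but instead quotes two classification results. For $J=\Pi$ it cites the proof of \cite[Proposition~5.3]{Sh2}, which shows that \emph{all} $13$ cuspidal character sheaves on $\mf G$ (type $\dt E_8$, any $p$) are unipotent, so the three arising from cuspidal pairs are automatically among them. For $J=\{\alpha_1,\ldots,\alpha_6\}$ it cites the proof of \cite[Proposition~20.3]{LuCS4}, which shows that the simple adjoint group of type $\dt E_6$ in characteristic~$3$ has exactly two cuspidal character sheaves and both are unipotent, and then \cite[17.10]{LuCS4} to transfer this statement from $\modulo{\mf L_J}{{\mf Z(\mf L_J)}^\circ}$ back to $\mf L_J$. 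Your \enquote{trivial local system on the central torus, hence trivial central character, hence unipotent} argument is morally why these results hold, but it is not a proof on its own; the cited references are precisely what turns it into one.
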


\begin{proof}
By the remarks in \Cref{NGE8p3}, the set of the $112$ different $A_\ii$ for $\ii\in\tau^{-1}((\mf T_0,\{1\},1))$ considered in \Cref{NGE8p3}(a) coincides with the set of the $A_{x_\phi}$ with $\phi\in\Irr(\mf W)$ considered in \Cref{HCE8}(a). For $\jj=(\mf G,\cO_0,\cE_0)$ as in \Cref{NGE8p3}(e), we have noted that $\ii=(\cO_0,\cE_0)$ is the unique element in its block $\tau^{-1}(\jj)$, and $A_\ii\cong K_\jj$ is a cuspidal character sheaf on $\mf G$. But any one of the $13$ cuspidal character sheaves on $\mf G$ is unipotent (see the proof of \cite[Proposition 5.3]{Sh2}), so $A_\ii$ is isomorphic to $A_x$ for some $x\in X(\mf W)$. It remains to consider the case where $\jj=(\mf L_J,\cO_0,\cE_0)$ is an element as in \Cref{NGE8p3}(c), so $J=\{\alpha_1,\alpha_2,\ldots,\alpha_6\}$. In view of \Cref{AxAi}, we have to show that the cuspidal character sheaf $A_0$ on $\mf L_J$ associated to $\jj$ is unipotent. Now $\modulo{\mf L_J}{{\mf Z(\mf L_J)}^\circ}$ is the simple (adjoint) group of type $\dt E_6$ in characteristic $3$, and the proof of \cite[Proposition 20.3]{LuCS4} shows that this group has exactly $2$ cuspidal character sheaves, both of which are unipotent. Using \cite[17.10]{LuCS4}, we conclude that $A_0\in{\hat{\mf L}_J}^\circ$ must be unipotent as well, as desired. Since all the unipotent character sheaves on $\mf G$ are $F$-stable (cf. \Cref{CharFctNorm1}), $\cN_\mf G=\cN_\mf G^F$ follows from \eqref{AphiGuni}.
\end{proof}

\begin{Rem}\label{AiAxE8p3}
\Cref{AiUnip} allows a \enquote{global} reformulation of \eqref{ReformGenSpring} in \Cref{AxAi} for $\mf G$ of type $\dt E_8$ in characteristic $3$:
\begin{equation}\label{ReformGenSpringE8p3}
\text{For any }\ii\in\cN_\mf G,\text{ find the }x\in X(\mf W)\text{ such that }A_x\cong A_\ii.\tag{$\diamondsuit'$}
\end{equation}
Thus, solving \eqref{ReformGenSpringE8p3} is equivalent to determining the generalised Springer correspondence \eqref{GenSpring} (with respect to all $\jj\in\cM_\mf G$) for $\mf G$.
\end{Rem}

\begin{0}\label{IntroE8p3E6}
We turn to the case \Cref{NGE8p3}(c). So let $J:=\{\alpha_1,\alpha_2,\ldots,\alpha_6\}\subseteq\Pi$, and let $(\cO_0,\cE_0)$ be one of the two pairs such that $\jj=(\mf L_J,\cO_0,\cE_0)\in\cM_\mf G$. From the proof of \cite[Proposition 20.3]{LuCS4}, we see that the two cuspidal (unipotent) character sheaves for simple groups of type $\dt E_6$ in characteristic $3$ are supported by the unipotent variety. In view of \cite[17.10]{LuCS4}, the two cuspidal (unipotent) character sheaves on $\mf L_J$ are thus supported by ${\mf Z(\mf L_J)}^\circ.(\mf L_J)_{\mathrm{uni}}$, so $\cO_0$ must be the regular unipotent class of $\mf L_J$. For $u_0\in\cO_0$, we have $A_{\mf L_J}(u_0)=\langle\overline u_0\rangle\cong\modulo{\Z}{3\Z}$. Hence, using the conventions in \Cref{AGu}, we can write $\jj=(\mf L_J,\cO_0,\cE_0)=(\mf L_J,\cO_0,\zeta)$ where $\zeta\in\Qlbarunits$ is a $3$rd root of unity. Recall from \Cref{AxAi} that $K_\jj$ is isomorphic to $\ind_{\mf L_J}^{\mf G}(A_0)$ where 
\[A_0:={\IC({\mf Z(\mf L_J)}^\circ.\overline{\cO}_0,1\boxtimes\cE_0)[\dim{\mf Z(\mf L_J)}^\circ+\dim\cO_0]}^{\#\mf L_J}\in\hat{\mf L}_J^{\circ,\mathrm{un}}.\]
We also recall (see \Cref{ParUCSHC}) that the isomorphism classes of the simple constituents of $\ind_{\mf L_J}^{\mf G}(A_0)$ are parametrised by triples $(J,\phi,\lambda_{A_0})\in\mathfrak S_\mf W$ where $\phi$ runs over the irreducible characters of $\mf W^{\Pi/J}$. The definition of $\lambda_{A_0}$ in \cite[3.6]{LuRCS} then shows that we have $\lambda_{A_0}=\zeta$, and this is a \emph{primitive} $3$rd root of unity by the definition of $\mathfrak S_{\mf W_J}^\circ$ in \cite[3.1]{LuRCS}. Now by \eqref{GenSpring}, we have a bijection
\[\Irr(\scW_\jj)\xrightarrow{\sim}\tau^{-1}(\jj).\]
Here, $\scW_\jj\cong\mf W^{\Pi/J}\cong W(\dt G_2)$ is the dihedral group of order $12$, so it has $6$ irreducible characters: four linear characters and two irreducible characters of degree $2$. While Spaltenstein \cite{Sp} has shown to which elements of $\tau^{-1}(\jj)$ the four linear characters of $\scW_\jj$ are mapped, this is not known for the two other irreducible characters! Put differently, using the characterisation \eqref{ReformGenSpring} in \Cref{AxAi}: For any of the two $x\in X(\mf W)$ corresponding to a triple $(J,\phi,\lambda_{A_0})\in\mathfrak S_\mf W$ with $\phi\in\Irr(\mf W^{\Pi/J})$ of degree $2$, the task is to find the $\ii\in\cN_\mf G$ such that $A_x\cong A_\ii$. (The two possibilities for those $\ii$ are $(\dt E_7+\dt A_1,\lambda_{A_0})$ and $(\dt E_7,\lambda_{A_0})$.) This ambiguity, for either of the two $A_0$ considered above, is exactly the last one that needs to be resolved to complete the generalised Springer correspondence. In order to describe it more precisely, we now proceed by fixing our notation for the relative Weyl group $\scW_\jj\cong\mf W^{\Pi/J}\cong W(\dt G_2)$ and its irreducible characters.
\end{0}

\begin{0}\label{WG2}
As before, let $J:=\{\alpha_1,\alpha_2,\ldots,\alpha_6\}\subseteq\Pi$. Let us give the concrete description of $\mf W^{\Pi/J}$, following \cite[3.1]{LuRCS}. The group $\mf W^{\Pi/J}$ is defined as the subgroup of $\mf W$ generated by the two elements
\[\sigma_i:=w_0^{J\cup\{\alpha_i\}}w_0^{J}=w_0^{J}w_0^{J\cup\{\alpha_i\}}\quad\text{for }i=7,8\]
(where, for $J'\subseteq\Pi$, $w_0^{J'}$ denotes the longest element of the Coxeter group $\mf W_{J'}$, the length function on $\mf W_{J'}$ being defined with respect to $J'\subseteq\Pi$). The elements $\sigma_7$, $\sigma_8$ have order $2$ and generate $\mf W^{\Pi/J}$ as a Coxeter group; more precisely, $\mf W^{\Pi/J}$ is a Weyl group coming from a natural root system induced from that in $\mf G$: This induced root system is of type $\dt G_2$, with $\sigma_7$ being the reflection with respect to a short simple root and $\sigma_8$ the reflection with respect to a long simple root, so the associated Dynkin diagram is as follows:
\begin{center}
\begin{tikzpicture}
    \draw (-1,0) node[anchor=east]  {$\dt G_2$};

    \node[bnode,label=above:$\sigma_7$] (1) at (0,0) {};
    \node[bnode,label=above:$\sigma_8$] (2) at (1,0) {};

    \path (2) edge[tedge] (1)
          ;
\end{tikzpicture}
\end{center}
Note that this is compatible with \cite[Theorem 9.2(a)]{LuIC}: We have a canonical isomorphism $\mf W^{\Pi/J}\cong W_\mf G(\mf L_J)$ under which $\sigma_i$ corresponds to the unique non-trivial element of $\modulo{N_{\mf L_{J\cup\{\alpha_j\}}}(\mf L_J)}{\mf L_J}$ for $i=7,8$ (cf. \cite[2.1]{LugenSpringer}). We now give our names for the irreducible characters of $\mf W^{\Pi/J}\cong W_\mf G(\mf L_J)$, which will be similar to those used in \cite[5.1]{LugenSpringer} (except that we write $\sgn$ instead of $s$): Thus, $1$ denotes the trivial character, $\sgn$ the sign character; $\epsilon, \epsilon'$ are the two remaining irreducible characters of degree $1$ such that
\[\epsilon(\sigma_7)=\epsilon'(\sigma_8)=1,\quad\epsilon(\sigma_8)=\epsilon'(\sigma_7)=-1.\]
Furthermore, $\rho$ is the character of the reflection representation and $\rho'$ the other irreducible character of degree $2$.
\end{0}

\begin{0}
We still assume that $J=\{\alpha_1,\alpha_2,\ldots,\alpha_6\}\subseteq\Pi$. Since we will refer to different sources in several places below, we give the explicit conversion between our notation concerning the relative Weyl group $\mf W^{\Pi/J}\cong W_\mf G(\mf L_J)$ in \Cref{WG2} and the ones used by Lusztig \cite[p.~361]{Luchars}, Carter \cite[p.~412]{C} (which coincides with the one in Michel's {\sffamily CHEVIE} \cite{MiChv}) and Spaltenstein \cite[p.~327]{Sp}, respectively. First of all, it is easy to compare our notation with the one in Lusztig's book as he explicitly states his conventions in \cite[p.~361]{Luchars}. The fact that our short (respectively, long) roots coincide with the short (respectively, long) roots as indicated by Carter \cite[p.~412]{C} implicitly follows from comparing his table \cite[p.~487]{C} describing the Harish-Chandra series of unipotent characters with Lusztig's \cite[p.~367]{Luchars}. However, one must be extremely careful when translating Spaltenstein's conventions to our notation as far as the characters $\epsilon$, $\epsilon'$ are concerned: Note that Spaltenstein's declarations for the long and short roots \cite[p.~327]{Sp} are opposite to ours (hence also to Lusztig's and Carter's)! Thus, forgetting the underlying root system and just regarding $\mf W^{\Pi/J}$ as a Coxeter group with Coxeter generators $\sigma_7,\sigma_8$ for a moment: In order to match with the setting of \cite{Sp}, we must identify $\sigma_7\leftrightarrow s_\ell$ and $\sigma_8\leftrightarrow s_c$, and then we have $\epsilon\leftrightarrow\epsilon_c$, $\epsilon'\leftrightarrow\epsilon_\ell$. (The other irreducible characters are not affected by the distribution of long or short roots.) The conversion scheme between our notation and that of Lusztig, Carter (thus also {\sffamily CHEVIE}) and Spaltenstein as described above is given in \Cref{TblConvG2CarLusSp}. (Whenever we refer to \cite{Sp}, we will still explicitly provide the relevant results with our notation to avoid any possible confusion, most notably the table in \cite[p.~329]{Sp}, see \Cref{TblSpaltE8E6} below.) Note that Spaltenstein does not distinguish the characters $\theta'$, $\theta''$ (which is the last ambiguity in the generalised Springer correspondence that remains to be resolved), so we only know that $\{\theta'$, $\theta''\}\leftrightarrow\{\rho,\rho'\}$ at this point.
{\renewcommand{\arraystretch}{1.4}
\begin{table}[htbp]
\centering
\begin{tabular}{c|c|c|c}
\hline
Our notation & Lusztig \cite[p.~361]{Luchars} & Carter \cite[p.~412]{C} & Spaltenstein \cite[p.~327]{Sp} \\
\hline
$1$ & $1$ & $\phi_{1,0}$ & $1$ \\
\hline
$\sgn$ & $\varepsilon$ & $\phi_{1,6}$ & $\varepsilon$ \\
\hline
$\epsilon$ & $\varepsilon'$ & $\phi_{1,3}'$ & $\varepsilon_c$ \\
\hline
$\epsilon'$ & $\varepsilon''$ & $\phi_{1,3}''$ & $\varepsilon_\ell$ \\
\hline
$\rho$ & $r$ & $\phi_{2,1}$ & $\theta'$ or $\theta''$ \\
\hline
$\rho'$ & $r'$ & $\phi_{2,2}$ & $\theta''$ or $\theta'$
\end{tabular}
\caption{Conversion scheme between our notation for the irreducible characters of $\mf W^{\Pi/J}\cong W(\dt G_2)$ and the ones used in \cite[p.~412]{C}, \cite[p.~361]{Luchars} and \cite[p.~327]{Sp}}
\label{TblConvG2CarLusSp}
\end{table}
}
\end{0}

\begin{0}\label{IntroLuConj}
Let $J=\{\alpha_1,\alpha_2,\ldots,\alpha_6\}$. Continuing our discussion in \Cref{IntroE8p3E6}, we can now state Lusztig's conjecture in a precise way. So let $\jj$ be one of the two elements of $\cM_\mf G$ of the form $(\mf L_J,\cO_0,\zeta_3)$ where $\zeta_3\in\Qlbarunits$ is a primitive $3$rd root of unity. With the notation of \Cref{WG2}, the elements in the block $\tau^{-1}(\jj)\subseteq\cN_\mf G$ are parametrised by
\[\Irr(\mf W_\mf G(\mf L_J))=\Irr(\mf W^{\Pi/J})=\{1,\sgn,\epsilon,\epsilon',\rho,\rho'\}.\]
As already mentioned before, Spaltenstein \cite{Sp} has shown to which element $\ii\in\tau^{-1}(\jj)$ any of the characters $1,\sgn,\epsilon,\epsilon'$ belongs, but he has not given the correspondence as far as $\rho,\rho'$ are concerned. We will for now use the notation in \cite{Sp} for the last two characters, that is, we just write $\{\theta',\theta''\}=\{\rho,\rho'\}$ without yet distinguishing between $\theta'$ and $\theta''$. With this ambiguity, the generalised Springer correspondence for the two blocks is provided in the table in \cite[p.~329]{Sp}. The conversion to our notation is given in \Cref{TblSpaltE8E6}; this table must be understood as follows: The first column contains the unipotent classes $\cO$ for which there exists a local system $\cE$ on $\cO$ such that $(\cO,\cE)\in\tau^{-1}(\jj)$. The second column gives the group $A_\mf G(u)=\modulo{C_\mf G(u)}{C_\mf G^\circ(u)}$ for $u\in\cO$. Here, we have $A_\mf G(u)=\langle\overline u\rangle$ in all cases, so the local system on $\cO$ can be described by the value of the corresponding irreducible character of $A_\mf G(u)$ at $\overline u$ in the third column; this value is in fact equal to $\zeta_3$ for any one of the unipotent classes considered. The last column gives the name of the irreducible character of $\mf W_\mf G(\mf L_J)\cong\mf W^{\Pi/J}$ (notation of \Cref{WG2}) which is mapped to $(\cO,\zeta_3)$ under the generalised Springer correspondence. With the distribution of $\theta'$, $\theta''$ as in \Cref{TblSpaltE8E6}, Lusztig's conjecture \cite[\S 6]{LugenSpringer} states:
\begin{equation}\label{OpenCaseE8p3LuConj}
\text{We have }\theta'=\rho\;\text{ and }\;\theta''=\rho',\;\text{for any one of the two }\jj\in\cM_\mf G\text{ as above.}\tag{$\heartsuit$}
\end{equation}
{\renewcommand{\arraystretch}{1.4}
\begin{table}[htbp]
\centering
\begin{tabular}{c|c|c|c}
Class of $u$ & $A_\mf G(u)$ & $\varsigma$ & $\rho_{u,\phi}^{\mf G}$ \\
\hline
$\dt E_8$ & $\modulo{\Z}{3\Z}$ & $\zeta_3$ & $1$ \\
\hline
$\dt E_8(a_1)$ & $\modulo{\Z}{3\Z}$ & $\zeta_3$ & $\epsilon$ \\
\hline
$\dt E_7+\dt A_1$ & $\modulo{\Z}{6\Z}$ & $\zeta_3$ & $\theta'$ \\
\hline
$\dt E_7$ & $\modulo{\Z}{3\Z}$ & $\zeta_3$ & $\theta''$ \\
\hline
$\dt E_6+\dt A_1$ & $\modulo{\Z}{3\Z}$ & $\zeta_3$ & $\epsilon'$ \\
\hline
$\dt E_6$ & $\modulo{\Z}{3\Z}$ & $\zeta_3$ & $\sgn$
\end{tabular}
\caption{The (incomplete) generalised Springer correspondence in $\dt E_8$, $p=3$, with respect to the blocks $\tau^{-1}(\jj)$, $\jj=(\mf L_J,\cO_0,\zeta_3)$, where $\cO_0\subseteq\mf L_J$ is the regular unipotent class and $\zeta_3\in\Qlbarunits$ is a primitive $3$rd root of unity, due to Spaltenstein \cite[p.~329]{Sp}.}
\label{TblSpaltE8E6}
\end{table}
}
\end{0}

\section{The proof of Lusztig's conjecture}\label{SecPf}

The setting and notation is exactly the same as in the previous section. (In particular, $\mf G$ is simple of type $\dt E_8$, $p=3$ and $\mf G^F=\dt E_8(3^f)$, $f\geqslant1$.) In addition, the names for the irreducible characters of $\mf W$ are as in \cite[4.13]{Luchars}. From now until the end of the paper, we use the following convention (see \Cref{SecParam}):
\begin{assumption}
For any $\jj=(\mf L_J,\cO_0,\cE_0)\in\cM_\mf G^F$, we assume to have fixed an isomorphism $\varphi_0\colon F^\ast\cE_0\xrightarrow{\sim}\cE_0$ which induces a map of finite order at the stalk of $\cE_0$ at any element of $\cO_0^F$. Then, for any $\ii\in\tau^{-1}(\jj)\subseteq\cN_\mf G^F=\cN_\mf G$, with corresponding $x\in X(\mf W)$ such that $A_x\cong A_\ii$ (see \Cref{AiUnip}), we set $\chi_\ii:=\chi_{A_\ii,\varphi_\ii}$ and $\chi_x:=\chi_{A_x,\varphi_x}$, where $\varphi_\ii\colon F^\ast A_\ii\xrightarrow{\sim}A_\ii$ and $\varphi_x\colon F^\ast A_x\xrightarrow{\sim}A_x$ are the unique isomorphisms defined through \ref{CharFctLuAlg} and \ref{ChixXi}, respectively.
\end{assumption}

\begin{0}\label{Hecke}
Let us consider the Hecke algebra associated to $\mf G^F$ and its $\BNpair$ $(\mf B_0^F,N_{\mf G}(\mf T_0)^F)$, that is, the endomorphism algebra
\[\cH_q:=\End_{\Qlbar\mf G^F}\bigl(\Qlbar[{\mf G^F}/{\mf B_0^F}]\bigr)^\mathrm{opp}.\]
(Here, \enquote{opp} stands for the opposite algebra.) $\cH_q$ has a $\Qlbar$-basis $\{T_w\mid w\in\mf W\}$ where
\[T_w\colon\Qlbar[{\mf G^F}/{\mf B_0^F}]\rightarrow\Qlbar[{{\mf G}^F}/{\mf B_0^F}],\quad x\mf B_0^F\mapsto\sum_{\substack{y\mf B_0^F\in{\mf G^F}/{\mf B_0^F} \\ x^{-1}y\in\mf B_0^F\dot w \mf B_0^F}}y\mf B_0^F,\]
for $w\in\mf W$. (Here and in what follows, whenever $w\in\mf W$ is given, we denote by $\dot w$ a representative of $\mf W$ in ${N_\mf G(\mf T_0)}^F$.) Let $\ell\colon\mf W\rightarrow\mathbb Z_{\geqslant0}$ be the length function of $\mf W$ with respect to the Coxeter generators $S=\{s_1,\ldots,s_8\}$. Then the multiplication in $\cH_q$ is determined by the following equations:
\[T_s\cdot T_w=\begin{cases}\hfil T_{sw}\quad&\text{if}\quad\ell(sw)=\ell(w)+1 \\ qT_{sw}+(q-1)T_{w}\quad&\text{if}\quad\ell(sw)=\ell(w)-1\end{cases}\quad(\text{for }s\in S,\, w\in\mf W).\]
For any $g\in\mf G^F$ and any $w\in\mf W$, we set
\[m(g,w):=\frac{|O_g\cap\mf B_0^F\dot w\mf B_0^F|\cdot|C_{\mf G^F}(g)|}{|\mf B_0^F|}\]
where $O_g\subseteq\mf G^F$ is the $\mf G^F$-conjugacy class of $g$. The irreducible characters of $\mf W$ naturally parametrise the isomorphism classes of irreducible modules of $\cH_q$, see \cite[Cor. 8.7]{Luchars} (recall our fixed choice of $\sqrt q$ in \eqref{sqrtp}). Given $\phi\in\Irr(\mf W)$, let $V_\phi$ be the corresponding module of $\cH_q$, and let $\rho_\phi\in\Uch(\mf G^F)$ be the image of $\phi$ under the map $\Irr(\mf W)\hookrightarrow\mathfrak S_\mf W\xrightarrow{\sim}\Uch(\mf G^F)$, see \Cref{ParUchHC}. By \cite[Remark 3.6(b)]{GCH} and \cite[\S 11D]{CR1}, we have
\[m(g,w)=\sum_{\phi\in\Irr(\mf W)}\rho_\phi(g)\Trace(T_w,V_\phi)\quad\text{for any }g\in\mf G^F,\;w\in\mf W,\]
where $\Trace(T_w,V_\phi)$ is the trace of the endomorphism of $V_\phi$ defined by $T_w$. It is shown in \cite[Corollary 14.14]{LuCS3} (see also \cite[3.6]{Luvaluni}) that the unipotent principal series characters $\rho_\phi$, $\phi\in\Irr(\mf W)$, can be expressed as linear combinations of characteristic functions of unipotent character sheaves: There exists a root of unity $\xi_x\in\Qlbarunits$ such that
\[\rho_\phi=(-1)^{\dim\mf G}\sum_{x\in X(\mf W)}\xi_x(A_x:R_\phi^{\cL_0})\chi_x\quad\text{for }\phi\in\Irr(\mf W).\]
(Note that, in general, $\xi_x$ depends on the choice of the isomorphism $F^\ast A_x\xrightarrow{\sim}A_x$, as well as on the one of a square root of $q$ in $\Qlbar$, but as we have fixed those in the beginning of this section and in \eqref{sqrtp}, respectively, we just write $\xi_x$ here. Also recall that our choice of $\varphi_x$ meets the requirement in \cite[25.1]{LuCS5}, which is the same as \cite[(13.8.1)]{LuCS3}.) Using \eqref{MultUCS}, we get
\[\rho_\phi=(-1)^{\dim\mf G}\sum_{x\in X(\mf W)}\xi_x\hat\varepsilon_x\{x,x_\phi\}\chi_x\quad\text{for }\phi\in\Irr(\mf W),\]
where $\hat\varepsilon_x:=\hat\varepsilon_{A_x}\in\{\pm1\}$. For any $g\in\mf G^F$ and any $w\in\mf W$, we thus obtain
\begin{equation}\label{GCarpath}
m(g,w)=\sum_{x\in X(\mf W)}\Biggl(\sum_{\phi\in\Irr(\mf W)}\{x,x_\phi\}\Trace(T_w,V_\phi)\Biggr)\nu_x\cdot \chi_x(g),\tag{$\spadesuit$}
\end{equation}
with
\[\nu_x:=(-1)^{\dim\mf G}\xi_x\hat\varepsilon_x\in\Qlbarunits\qquad\text{(a root of unity).}\]
The character values $\Trace(T_w,V_\phi)$ of the Hecke algebra $\cH_q$ have been determined by Geck and Michel in \cite{GeMi}; they are (just as the entries $\{x,x_\phi\}$ of the Fourier matrix) readily available through  {\sffamily {CHEVIE}} \cite{MiChv}, using the function \texttt{HeckeCharValues}. Hence, the coefficients of $\nu_x\chi_x(g)$ in \eqref{GCarpath} can be computed explicitly.
\end{0}

\begin{0}
Let $\ii\in\cN_\mf G^F=\cN_\mf G$. Thus, by \Cref{ChixXi}, whenever we know the $x\in X(\mf W)$ corresponding to $\ii$ under \eqref{ReformGenSpringE8p3} in \Cref{AiAxE8p3}, we can express $\chi_x|_{\mf G^F_\mathrm{uni}}$ as an \emph{explicit} scalar multiple of $X_\ii$. There are in total $4$ different $\ii\in\cN_\mf G$ for which the solution to \eqref{ReformGenSpringE8p3} is not known, see \Cref{IntroLuConj}. The idea to prove \eqref{OpenCaseE8p3LuConj} in \Cref{IntroLuConj} consists in evaluating the equation \eqref{GCarpath} in \Cref{Hecke} with a suitable $w\in\mf W$ (and $u\in\mf G^F_{\mathrm{uni}}$) and obtaining contradictions whenever \eqref{OpenCaseE8p3LuConj} does not hold. (We will not have to consider such $x\in X(\mf W)$ which do not correspond to an element of $\cN_\mf G$ as above, see \Cref{NonVanishRxNG} below.) Let us set
\begin{equation}\label{Defw49}
w_{49}:=s_2s_3s_4s_3s_5s_4s_1s_2s_3s_4s_5s_6s_7s_8\in\mf W
\end{equation}
(i.e., the $49$th element in \texttt{ChevieClassInfo($\mf W$).classtext} in {\sffamily CHEVIE} \cite{MiChv}). Then $w_{49}$ is an element of minimal length in its conjugacy class $C_{49}\subseteq\mf W$. In \cite{LuWeylUni}, Lusztig defined a surjective map from $\mf W$ to the set of unipotent classes of $\mf G$, and $C_{49}$ is sent to $\dt E_7+\dt A_1$ under this map. By the results of \cite{LuWeylUni} and \cite[4.8]{LuEllip}, we have $\cO\cap\mf B_0\dot w_{49}\mf B_0=\varnothing$ unless $\cO\in\{\dt E_8, \dt E_8(a_1),\dt E_8(a_2),\dt E_7+\dt A_1\}$. In particular, we know that $O_u\cap\mf B_0^F\dot w_{49}\mf B_0^F=\varnothing$ for $u\in\dt E_7^F$, so we have
\[m(u,w_{49})=0\quad\text{for any }u\in\dt E_7^F.\]
Now let us consider the right hand side of \eqref{GCarpath}, for $u$ and $w_{49}$ as above. To do this, we thus need a detailed information on the values $\chi_x(u)$ for $x\in X(\mf W)$ and $u\in\dt E_7^F$.
\end{0}

\begin{Lm}\label{NonVanishRxNG}
Let $x\in X(\mf W)$, and assume that $\chi_x(u)\neq0$ for some $u\in\mf G^F_{\mathrm{uni}}$. Then there exists some $\ii\in\cN_\mf G$ such that $A_x$ is isomorphic to $A_\ii$.
\end{Lm}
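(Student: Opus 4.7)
The plan is to establish the contrapositive: if $A_x \not\cong A_\ii$ for every $\ii \in \cN_\mf G$, then $\chi_x$ vanishes identically on $\mf G^F_{\mathrm{uni}}$. Since $\chi_x(g)$ can be non-zero only when $g \in \supp A_x$, the task reduces to the purely geometric statement: \emph{if $\supp A_x \cap \mf G_{\mathrm{uni}} \neq \varnothing$, then $A_x \cong A_\ii$ for some $\ii \in \cN_\mf G$}.

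Suppose $u \in \supp A_x \cap \mf G_{\mathrm{uni}}$. By the Harish--Chandra picture recalled in \Cref{ParUCSHC}, $A_x$ occurs as a simple direct summand of $\ind_{\mf L_J}^{\mf G}(A_0)$ for a (unique up to $\mf G$-conjugacy) cuspidal unipotent character sheaf $A_0 \in \hat{\mf L}_J^{\circ,\mathrm{un}}$ on the standard Levi $\mf L_J$. Writing $\supp A_0 = \overline{\Sigma_0}$ with $\Sigma_0 = \mf Z(\mf L_J)^\circ \cdot \cC$ for some $\mf L_J$-conjugacy class $\cC$ isolated modulo $\mf Z(\mf L_J)^\circ$ (see \cite[2.4]{LuIC}), pick a representative $c \in \cC$ with Jordan decomposition $c = s_c u_c$. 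The heart of the argument is to force $s_c \in \mf Z(\mf L_J)^\circ$: once this is achieved, $\Sigma_0$ takes the form $\mf Z(\mf L_J)^\circ \cdot \cO_0$ for the unipotent class $\cO_0 := \cO_{u_c}$ of $\mf L_J$, and the triple $(\mf L_J, \cO_0, \cE_0)$, with $\cE_0$ the natural local system on $\cO_0$, will be a cuspidal datum defining an element of $\cM_\mf G$.

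To force $s_c \in \mf Z(\mf L_J)^\circ$, recall that $\supp\ind_{\mf L_J}^{\mf G}(A_0) = \bigcup_{g \in \mf G} g(\overline{\Sigma_0}\cdot\mf U_{\mf P_J})g^{-1}$. Two standard facts about Jordan decomposition are used: (i) for any $p \in \mf P_J$, the semisimple part of $p$ in $\mf G$ is $\mf U_{\mf P_J}$-conjugate to the semisimple part of its image in $\mf L_J$; and (ii) the closure $\overline{\cC}$ consists of elements whose semisimple parts are $\mf L_J$-conjugate to $s_c$ (as semisimple conjugacy classes in reductive groups are closed). Combining these, every element of $\supp A_x \subseteq \supp\ind_{\mf L_J}^{\mf G}(A_0)$ has semisimple part $\mf G$-conjugate to an element of $\mf Z(\mf L_J)^\circ \cdot s_c$. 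Applying the Chevalley quotient $\pi\colon\mf G\to\mf T_0/\mf W$ (a continuous morphism which restricted to $\mf T_0$ is the finite, hence closed, quotient by $\mf W$), the image $\pi(\mf Z(\mf L_J)^\circ \cdot s_c)$ is closed and contains $\pi(u) = \pi(1)$. Hence there exists $z \in \mf Z(\mf L_J)^\circ$ with $zs_c$ conjugate to $1$ under $\mf W$ in $\mf T_0$; since $1$ is $\mf W$-fixed, $zs_c = 1$, so $s_c = z^{-1} \in \mf Z(\mf L_J)^\circ$.

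Given $s_c \in \mf Z(\mf L_J)^\circ$, the discussion in \Cref{AxAi} identifies $K_\jj$ with $\ind_{\mf L_J}^{\mf G}(A_0)$ for $\jj = (\mf L_J, \cO_0, \cE_0) \in \cM_\mf G$, and its simple direct summands are precisely the $A_\ii$ for $\ii \in \tau^{-1}(\jj) \subseteq \cN_\mf G$; thus $A_x \cong A_\ii$ for some such $\ii$, as required. The main obstacle, and where the genuine (though standard) technical input lies, is the semisimple-part analysis of the third paragraph: one needs that closures of conjugacy classes and products with $\mf U_{\mf P_J}$ preserve the constraint on semisimple parts, and crucially that the image $\pi(\mf Z(\mf L_J)^\circ\cdot s_c)$ of the closed torus coset is closed in $\mf T_0/\mf W$, which rests on the finiteness of $\mf T_0 \to \mf T_0/\mf W$ and the closedness of semisimple conjugacy classes.
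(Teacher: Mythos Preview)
Your argument is correct and takes a genuinely different route from the paper. The paper invokes the cleanness theorem for cuspidal character sheaves (\cite[Theorem~23.1]{LuCS5}, \cite{LuclCS}): since $A_0$ has zero stalks on $\overline\Sigma\setminus\Sigma$, the nonvanishing $\chi_x(u)\neq 0$ traces through the stalk description of $\ind_{\mf L_J}^{\mf G}(A_0)$ to a nonzero stalk of $A_0$ at some element of $\Sigma$ which is automatically unipotent (the projection $\mf P_J\to\mf L_J$ preserves unipotency). Hence $\Sigma\cap(\mf L_J)_{\mathrm{uni}}\neq\varnothing$, and one may take $\cC$ to be a unipotent class up to a central factor. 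You, by contrast, use only the coarse inclusion $u\in\supp A_x\subseteq\bigcup_g g\,\overline{\Sigma_0}\,\mf U_{\mf P_J}\,g^{-1}$ and control the semisimple parts via the adjoint quotient. This trades a single appeal to the (deep) cleanness theorem for an elementary geometric argument.

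Two minor expository points. First, your fact~(ii) concerns $\overline{\cC}$, whereas the support formula involves $\overline{\Sigma_0}=\overline{\mf Z(\mf L_J)^\circ\cdot\cC}$, which need not equal $\mf Z(\mf L_J)^\circ\cdot\overline{\cC}$. The cleanest fix is to run the closedness argument already at the level of $\mf L_J$: the adjoint quotient $\pi'\colon\mf L_J\to\mf T_0/W_{\mf L_J}(\mf T_0)$ sends $\Sigma_0$ to $\pi'(\mf Z(\mf L_J)^\circ s_c)$, which is closed (image of a closed torus coset under the finite map $\mf T_0\to\mf T_0/W_{\mf L_J}(\mf T_0)$); hence $\pi'(\overline{\Sigma_0})$ lands there and every element of $\overline{\Sigma_0}$ has semisimple part $\mf L_J$-conjugate into $\mf Z(\mf L_J)^\circ s_c$. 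Second, once you have that $u_s=1$ is $\mf G$-conjugate to an element of $\mf Z(\mf L_J)^\circ s_c$, the conclusion $s_c\in\mf Z(\mf L_J)^\circ$ is immediate (the identity is conjugate only to itself); your final appeal to the Chevalley quotient for $\mf G$ is not needed.
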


\begin{proof}
By the description in \Cref{ParUCSHC}, $A_x$ is parametrised by some triple $(J,\phi,\zeta)\in\mathfrak S_\mf W$, that is, $A_x$ is (isomorphic to) a simple direct summand of $\ind_{\mf L_J}^{\mf G}(A_0)$ where $A_0$ corresponds to $\zeta\in\mathfrak S_{\mf W_J}^\circ$. Since $A_0\in{\hat{\mf L}_J}^\circ$, there exists some cuspidal pair $(\Sigma,\cE)$ for $\mf L_J$ such that
\[A_0\cong{\IC(\overline\Sigma,\cE)[\dim\Sigma]}^{\#\mf L_J}\]
(see \cite[Proposition 3.12]{LuCS1}, \cite[\S 2]{LuIC}). In view of the cleanness of cuspidal character sheaves (\cite[Theorem 23.1]{LuCS5}, \cite{LuclCS}), we know that $A_0$ is $0$ outside of $\Sigma$. Now we have
\[\supp A_x=\bigcup_{g\in\mf G}g(\supp A_0)R_\mathrm u(\mf P_J)g^{-1},\]
see, e.g., \cite[2.9]{Luvaluni}. Thus, our assumption that $\chi_x(u)\neq0$ for some $u\in\mf G^F_{\mathrm{uni}}$ implies that $\Sigma\cap{{(\mf L_J)}_{\mathrm{uni}}}\neq\varnothing$. By the definition of cuspidal pairs \cite[\S 2]{LuIC}, $\Sigma$ is of the form ${\mf Z(\mf L_J)}^\circ.\cC$ for some conjugacy class $\cC\subseteq\mf L_J$, so we can assume that $\cC=\cO_0$ is a \emph{unipotent} conjugacy class in $\mf L_J$ and that $\cE$ is the inverse image of some $\mf L_J$-equivariant irreducible local system $\cE_0$ on $\cO_0$ under the canonical map ${\mf Z(\mf L_J)}^\circ.\cO_0\rightarrow\cO_0$, that is, we have $\jj:=(\mf L_J,\cO_0,\cE_0)\in\cM_\mf G$. The associated perverse sheaf $K_\jj$ (see \Cref{LuIC}) is isomorphic to $\ind_{\mf L_J}^{\mf G}(A_0)$ by \cite[Prop. 4.5]{LuIC}, so $A_x$ must be isomorphic to $A_\ii$ for some $\ii\in\tau^{-1}(\jj)\subseteq\cN_\mf G$.
\end{proof}

\begin{Lm}\label{VanishRx}
Let $\ii=(\cO,\cE)\in\cN_\mf G=\cN_\mf G^F$ be such that $\cO\notin\{\dt E_8,\dt E_8(a_1),\dt E_8(a_2),\dt E_7+\dt A_1,\dt E_7\}$. Then, for any $u\in\dt E_7^F$, we have
\[X_\ii(u)=0.\]
\end{Lm}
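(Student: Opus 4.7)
The plan is to combine \Cref{LusztigAlgorithm}(b) with the block/triangularity properties summarised in \Cref{CorLusztigAlgorithm} and the known closure order on unipotent classes of $\mf G$. By \Cref{LusztigAlgorithm}(b), I would write
\[X_\ii = \sum_{\ii'\in\cN_\mf G^F} p_{\ii',\ii}\, Y_{\ii'}.\]
Fix $u\in\dt E_7^F$. By the very definition of $Y_{\ii'}$ in \Cref{XiYi}, we have $Y_{\ii'}(u)=0$ unless $u\in\cO'^F$, where $\ii'=(\cO',\cE')$. Since $u$ lies in the class $\dt E_7$, this forces $\cO'=\dt E_7$, and hence
\[X_\ii(u)=\sum_{\cE'} p_{(\dt E_7,\cE'),\ii}\, Y_{(\dt E_7,\cE')}(u),\]
the sum being over those irreducible $\mf G$-equivariant local systems $\cE'$ on $\dt E_7$ for which $(\dt E_7,\cE')\in\cN_\mf G^F$.

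Next I would invoke \Cref{CorLusztigAlgorithm}(ii): for $(\dt E_7,\cE')\neq\ii=(\cO,\cE)$, the coefficient $p_{(\dt E_7,\cE'),\ii}$ is zero unless $\dt E_7\subsetneq\overline{\cO}$. The hypothesis on $\ii$ already excludes $\cO=\dt E_7$ (which would be the only way the diagonal term $p_{\ii,\ii}=1$ could enter), so what remains to be verified is purely the combinatorial statement that, in the closure order on unipotent classes of $\mf G$ (for $p=3$), the classes $\cO$ with $\dt E_7\subseteq\overline{\cO}$ are precisely
\[\dt E_8,\ \dt E_8(a_1),\ \dt E_8(a_2),\ \dt E_7+\dt A_1,\ \dt E_7.\]
This list of classes is the complement (within the closures containing $\dt E_7$) of the hypothesis of the lemma, so once this is established, every coefficient $p_{(\dt E_7,\cE'),\ii}$ in the above sum vanishes, and the conclusion $X_\ii(u)=0$ follows.

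The only step requiring genuine input beyond the formal machinery is therefore the determination of the classes whose closure contains $\dt E_7$. This is a tabulated fact (e.g., it can be read off from Spaltenstein's tables or extracted directly from the \texttt{UnipotentClasses} function in Michel's {\sffamily CHEVIE} \cite{MiChv}, which also encodes the Hasse diagram of the closure order). I do not anticipate any serious obstacle here: once the list of classes above is confirmed, the remainder of the argument is a direct unwinding of the definitions of $X_\ii$ and $Y_{\ii'}$ together with the upper-unitriangular shape of $(p_{\ii',\ii})$.
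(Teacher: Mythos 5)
Your argument is correct and is essentially the paper's own proof: both reduce to the observation that $Y_{\ii'}(u)\neq 0$ forces $\cO'=\dt E_7$, then apply the support/triangularity condition of \Cref{CorLusztigAlgorithm} together with the tabulated fact that $\dt E_7\subseteq\overline{\cO}$ exactly for the five listed classes. The paper phrases this contrapositively, but the content is identical.
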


\begin{proof}
Let us assume that $X_\ii(u)\neq0$. By \Cref{LusztigAlgorithm} (and the definition of the $Y_{\ii'}$ for $\ii'\in\cN_\mf G^F$), there must exist some $\ii'=(\cO',\cE')\in\cN_\mf G^F$ with $\cO'=\dt E_7$ and with $p_{\ii',\ii}\neq0$ in \Cref{LusztigAlgorithm}(b). It then follows from \Cref{CorLusztigAlgorithm} that we must have $\dt E_7\subseteq\overline{\cO}$, which happens precisely for $\cO\in\{\dt E_8,\dt E_8(a_1),\dt E_8(a_2),\dt E_7+\dt A_1,\dt E_7\}$.
\end{proof}

\begin{0}\label{GreenFunctions}
We can now proceed with the evaluation of the right hand side of the equation \eqref{GCarpath} in \Cref{Hecke}, for any $g=u\in\dt E_7^F$ and with $w=w_{49}$ as in \eqref{Defw49}. In view of \Cref{NonVanishRxNG} and \Cref{VanishRx}, we are reduced to considering those $x\in X(\mf W)$ for which $A_x\cong A_\ii$ where $\ii=(\cO,\cE)\in\cN_\mf G=\cN_\mf G^F$ is such that $\cO\in\{\dt E_8,\dt E_8(a_1),\dt E_8(a_2),\dt E_7+\dt A_1,\dt E_7\}$. For such an $\ii$, let $\jj=\tau(\ii)=(\mf L_J,\cO_0,\cE_0)\in\cM_\mf G^F$, with $J\subseteq\Pi$. Recall from \Cref{NGE8p3} that there are three possibilities for $J$, namely, $J=\varnothing$, $J=\{\alpha_1,\alpha_2,\ldots,\alpha_6\}$ or $J=\Pi$. In the case where $J=\Pi$, we have $A_\ii\cong K_\jj$, and this is a cuspidal character sheaf on $\mf G$. We necessarily have $\cO=\cO_0=\dt E_7+\dt A_1$, see \cite[p.~337]{Sp}. By the cleanness of cuspidal character sheaves (\cite[23.1]{LuCS5}, \cite{LuclCS}), we already know that $X_\ii(u)=0$ (thus also $\chi_x(u)=0$) for $u\in\dt E_7^F$. So it remains to consider the following two cases (labelled as in \Cref{NGE8p3}):
\begin{itemize}
\item[(a)] $J=\varnothing$, that is, $\ii$ is in the image of the ordinary Springer correspondence \eqref{OrdSpring}, so it is parametrised by some explicitly known $\phi\in\Irr(\mf W)$; see \Cref{OrdSpringE8p3} below.
\item[(c)] $J=\{\alpha_1,\alpha_2,\ldots,\alpha_6\}$; see \Cref{LuConj} below.
\end{itemize}
A detailed description of the generalised Springer correspondence with respect to (a) and (c) is given in \Cref{TblGenSpring}, with the following conventions: Let us consider a fixed line, and let $\cO\subseteq\mf G$ be the unipotent class appearing in the first column in that line. We assume to have fixed a representative $u\in\cO$, and then $d_u$ is the dimension of the variety of all Borel subgroups of $\mf G$ which contain $u$. As for the notation $a:b$ in the last two columns, $a$ denotes the irreducible character of $A_\mf G(u)$ describing the local system on $\cO$, and $b$ is the irreducible character of the relative Weyl group $\mf W^{\Pi/J}$ such that $b$ is mapped to $(\cO,a)$ under the generalised Springer correspondence. In the last column, we do not yet know which of $\rho$, $\rho'$ belongs to which place, only that $\omega:\rho$ and $\omega^2:\rho$ both appear exactly once (and analogously for $\rho'$). We also provide the position of the corresponding unipotent character in {\sffamily CHEVIE} \cite{MiChv} via \eqref{ReformGenSpringE8p3} in \Cref{AiAxE8p3}, as it appears in \texttt{UnipotentCharacters($\mf W$)}, with the convention $\omega=\dt E3$.
{\renewcommand{\arraystretch}{1.4}
\begin{table}[htbp]
\centering
\begin{tabular}{c|c|c|c|c}
$u$ & $d_u$ & $A_\mf G(u)$ & $J=\varnothing$, $\mf W^{\Pi/J}=\mf W$ & $J=\dt E_6$, $\mf W^{\Pi/J}\cong W(\dt G_2)$ \\
\hline
\multirow{2}{*}{$\dt E_8$} & \multirow{2}{*}{$0$} & \multirow{2}{*}{$\modulo{\Z}{3\Z}$} & \multirow{2}{*}{$1:1_x$ (\#1)} & $\omega:1$ (\#142) \\
& & & & $\omega^2:1$ (\#148) \\
\hline
\multirow{2}{*}{$\dt E_8(a_1)$} & \multirow{2}{*}{$1$} & \multirow{2}{*}{$\modulo{\Z}{3\Z}$} & \multirow{2}{*}{$1:8_z$ (\#68)} & $\omega:\epsilon$ (\#143) \\
& & & & $\omega^2:\epsilon$ (\#149) \\
\hline
$\dt E_8(a_2)$ & $2$ & $\{1\}$ & $1:35_x$ (\#5) & --- \\
\hline
\multirow{2}{*}{$\dt E_7+\dt A_1$} & \multirow{2}{*}{$3$} & \multirow{2}{*}{$\modulo{\Z}{6\Z}$} & $1:112_z$ (\#72) & $\omega:\rho/\rho'$ (\#152/\#153) \\
& & & $-1:28_x$ (\#3) & $\omega^2:\rho/\rho'$ (\#146/\#147) \\
\hline
\multirow{2}{*}{$\dt E_7$} & \multirow{2}{*}{$4$} & \multirow{2}{*}{$\modulo{\Z}{3\Z}$} & \multirow{2}{*}{$1:84_x$ (\#10)} & $\omega:\rho'/\rho$ (\#153/\#152) \\
& & & & $\omega^2:\rho'/\rho$ (\#147/\#146)
\end{tabular}
\caption{Parts of the generalised Springer correspondence for $\dt E_8$, $p=3$, due to \cite{Sp}. The number in parentheses refers to the {\sffamily CHEVIE} number of the corresponding unipotent character via \eqref{ReformGenSpringE8p3} in \Cref{AiAxE8p3}, with the convention $\omega=\dt E3$.}
\label{TblGenSpring}
\end{table}
}
\end{0}

\begin{0}\label{OrdSpringE8p3}
Let us assume that $\ii=(\cO,\cE)\in\cN_\mf G^F$ is as in \Cref{GreenFunctions}(a), so that $\jj=\tau(\ii)=(\mf T_0,\{1\},1)$. Let $x\in X(\mf W)$ be such that $A_x\cong A_\ii$. By \Cref{CorLusztigAlgorithm}(iii), we know that whenever $p_{\ii',\ii}\neq0$ for some $\ii'\in\cN_\mf G^F$, we must have $\tau(\ii')=\jj$. The only such $\ii'=(\cO',\cE')$ with $\cO'=\dt E_7$ is $\ii'=(\dt E_7,1)$. So for $u\in\dt E_7^F$, in order to express $X_\ii(u)$ in terms of the $Y_{\ii'}(u)$, we only need to know $p_{\ii',\ii}$ with $\ii'=(\dt E_7,1)$. This is easily obtained using the {\sffamily CHEVIE} \cite{MiChv} functions \texttt{UnipotentClasses} and \texttt{ICCTable}. (For any $\ii$ as above, we have $p_{\ii',\ii}\in\{0,1\}$.) In combination with \Cref{ChixXi}, we can give the coefficient of $\chi_x$ at $Y_{\ii'}$; see \Cref{TblGreen}.
{\renewcommand{\arraystretch}{1.4}
\begin{table}[htbp]
\centering
\begin{tabular}{c|c|c|c}
\hline
\multicolumn{3}{c|}{$\phi\in\Irr(\mf W)$ so that $x=x_\phi$ \eqref{IrrWXW}}  & Coefficient of \\
Lusztig \cite{Luchars} & Carter \cite{C} & {\sffamily CHEVIE} \cite{MiChv} & $\chi_x$ at $Y_{\ii'}$ \\
\hline
$1_x$ & $\phi_{1,0}$ & \#1 & $1$ \\
\hline
$8_z$ & $\phi_{8,1}$ & \#68 & $q$ \\
\hline
$35_x$ & $\phi_{35,2}$ & \#5 & $q^2$ \\
\hline
$28_x$ & $\phi_{28,8}$ & \#3 & $0$ \\
\hline
$112_z$ & $\phi_{112,3}$ & \#72 & $q^3$ \\
\hline
$84_x$ & $\phi_{84,4}$ & \#10 & $q^4$ \\
\end{tabular}
\caption{Coefficients of $\chi_x$ at $Y_{\ii'}$ for the relevant $x=x_\phi\in X(\mf W)$, where $\ii'=(\dt E_7,1)$. The {\sffamily CHEVIE} column refers to the position of the corresponding unipotent principal series character in \texttt{UnipotentCharacters($\mf W$)}.}
\label{TblGreen}
\end{table}
}
\end{0}

\begin{0}\label{LuConj}
Now let us consider the case (c) in \Cref{GreenFunctions}. So let $J=\{\alpha_1,\alpha_2,\ldots,\alpha_6\}\subseteq\Pi$, and let $\jj=(\mf L_J,\cO_0,\cE_0)$ be one of the two associated elements of $\cM_\mf G^F$. Thus, $\modulo{\mf L_J}{{\mf Z(\mf L_J)}^\circ}$ is the simple (adjoint) group of type $\dt E_6$, and $\scW_\jj=W_\mf G(\mf L_J)\cong\mf W^{\Pi/J}\cong W(\dt G_2)$ is the dihedral group of order $12$. We use the notation of \Cref{WG2} for the irreducible characters of $\mf W^{\Pi/J}$.
\begin{assumption}
From now on we fix a primitive $3$rd root of unity $\omega\in\Qlbarunits$.
\end{assumption}
Recall from \Cref{IntroLuConj} that the above two elements $\jj$ of $\cM_\mf G$ are $(\mf L_J,\cO_0,\omega)$ and $(\mf L_J,\cO_0,\omega^2)$ (with the conventions in \Cref{AGu}), where $\cO_0\subseteq\mf L_J$ is the regular unipotent class. The block of $\cN_\mf G^F$ corresponding to $\jj=(\mf L_J,\cO_0,\omega)$ is given by
\[\tau^{-1}(\jj)=\{(\dt E_8,\omega),(\dt E_8(a_1),\omega),(\dt E_7+\dt A_1,\omega),(\dt E_7,\omega),(\dt E_6+\dt A_1,\omega),(\dt E_6,\omega)\}.\]
In view of \Cref{TblSpaltE8E6}, the generalised Springer correspondence with respect to this block is
\begin{align*}
1\leftrightarrow(\dt E_8,\omega),\;\epsilon\leftrightarrow(\dt E_8(a_1),\omega),\;&\epsilon'\leftrightarrow(\dt E_6+\dt A_1,\omega),\;\sgn\leftrightarrow(\dt E_6,\omega), \\
\theta'\leftrightarrow(\dt E_7+\dt A_1,\omega&),\;\theta''\leftrightarrow(\dt E_7,\omega),
\end{align*}
where $\{\theta',\theta''\}=\{\rho,\rho'\}$. So we get the following possibilities with respect to $\jj=(\mf L_J,\cO_0,\omega)$:
\begin{enumerate}
\item[($\omega$a)] $\rho\leftrightarrow(\dt E_7+\dt A_1,\omega)$ and $\rho'\leftrightarrow(\dt E_7,\omega)$, or
\item[($\omega$b)] $\rho\leftrightarrow(\dt E_7,\omega)$ and $\rho'\leftrightarrow(\dt E_7+\dt A_1,\omega)$.
\end{enumerate}
Analogously, replacing $\omega$ by $\omega^2$ in the above discussion, one of the following holds with respect to $\jj=(\mf L_J,\cO_0,\omega^2)$:
\begin{enumerate}
\item[($\omega^2$a)] $\rho\leftrightarrow(\dt E_7+\dt A_1,\omega^2)$ and $\rho'\leftrightarrow(\dt E_7,\omega^2)$, or
\item[($\omega^2$b)] $\rho\leftrightarrow(\dt E_7,\omega^2)$ and $\rho'\leftrightarrow(\dt E_7+\dt A_1,\omega^2)$.
\end{enumerate}
In order to prove Lusztig's conjecture \eqref{OpenCaseE8p3LuConj} (see \Cref{IntroLuConj}), we thus have to show that ($\omega$a) and ($\omega^2$a) hold. We do this by excluding ($\omega$b) and ($\omega^2$b).
\end{0}

\begin{0}\label{MatricesG2}
We keep the setting and notation of \Cref{LuConj}. Thus, we have $\jj=(\mf L_J,\cO_0,\zeta_3)\in\cM_\mf G^F$ with $J=\{\alpha_1,\alpha_2,\ldots,\alpha_6\}$ and $\zeta_3\in\{\omega,\omega^2\}$. For $\ii\in\tau^{-1}(\jj)$, we want to evaluate $X_\ii$ at $u\in\dt E_7^F$, so we need to know the coefficient $p_{\ii',\ii}$ in \Cref{LusztigAlgorithm}(b) where $\ii'=(\dt E_7,\zeta_3)\in\tau^{-1}(\jj)$. But note that $p_{\ii',\ii}$ depends on whether $(\zeta_3$a) or $(\zeta_3$b) in \Cref{LuConj} holds. Since the corresponding coefficient matrices $P_{(\zeta_3\mathrm{a})}$, $P_{(\zeta_3\mathrm{b})}$ (indexed according to whether ($\zeta_3$a) or ($\zeta_3$b) holds) only depend on $\scW_\jj\cong W(\dt G_2)$ (and not on $\mf G$ or $\mf L_J$), we can just take them from \cite[5.2]{LugenSpringer}. They are given by
\[P_{(\zeta_3\mathrm{a})}=\begin{pmatrix}
1 & 0 & 0 & 0 & 0 & 0 \\
0 & 1 & 0 & 0 & 0 & 0 \\
q^2 & 0 & 1 & 0 & 0 & 0 \\
q^2 & q^2 & 1 & 1 & 0 & 0 \\
q^6 & 0 & q^4 & q^4 & 1 & 0 \\
q^6 & q^8 & q^4+q^8 & q^4+q^6 & 1 & 1
\end{pmatrix}
\quad\longleftrightarrow\quad(1,\epsilon,\rho,\rho',\epsilon',\sgn)
\]
and
\[P_{(\zeta_3\mathrm{b})}=\begin{pmatrix}
1 & 0 & 0 & 0 & 0 & 0 \\
0 & 1 & 0 & 0 & 0 & 0 \\
0 & q & 1 & 0 & 0 & 0 \\
q^3 & q & 1 & 1 & 0 & 0 \\
q^6 & 0 & q^5 & q^3 & 1 & 0 \\
q^6 & q^8 & q^5+q^7 & q^3+q^7 & 1 & 1
\end{pmatrix}
\quad\longleftrightarrow\quad(1,\epsilon,\rho',\rho,\epsilon',\sgn),
\]
where the vector on the right describes the ordering of the irreducible characters of $W(\dt G_2)$ corresponding to the rows and columns of $P_{(\zeta_3\mathrm{a})}$, $P_{(\zeta_3\mathrm{b})}$, respectively. Hence, for $\ii'=(\dt E_7,\zeta_3)\in\tau^{-1}(\jj)$ as above, the coefficients of the various $X_\ii$ at $Y_{\ii'}$ are the entries in the $4$th line of either of the matrices $P_{(\zeta_3\mathrm{a})}$, $P_{(\zeta_3\mathrm{b})}$. Recall (\ref{AxAi}, \ref{AiAxE8p3}) that the $x\in X(\mf W)$ corresponding to a given $\ii\in\{(\dt E_7+\dt A_1,\zeta_3),(\dt E_7,\zeta_3)\}$ depends on whether ($\zeta_3$a) or ($\zeta_3$b) holds. Using \Cref{ChixXi} and the matrices $P_{(\zeta_3\mathrm{a})}$, $P_{(\zeta_3\mathrm{b})}$, we find the coefficient of $Y_{\ii'}$ in the relevant $\chi_x$. These are given in \Cref{TblValuesRx}, with the following conventions: In the first column, we describe $x\in X(\mf W)$ by giving the corresponding element of $\mathfrak S_\mf W$ via \Cref{Parcompatible}; here, we write $\dt E_6$ instead of $J=\{\alpha_1,\alpha_2,\ldots,\alpha_6\}$. The second column contains the {\sffamily CHEVIE} \cite{MiChv} number of the unipotent character $\rho_x$. In the third column, the index of $\scF$ specifies that family by being the special character in it \cite[(4.1.4)]{Luchars}, and we also give the label of $\rho_x$ in this family. The $\zeta_3$ in the last two columns is always meant to be the unique element in $\{\omega,\omega^2\}$ which appears in the line considered.
{\renewcommand{\arraystretch}{1.4}
\begin{table}[htbp]
\centering
\begin{tabular}{c|c|c|c|c}
$x\in X(\mf W)$ & \multirow{2}{*}{{\sffamily CHEVIE}} & \multirow{2}{*}{Family:Label} & \multicolumn{2}{c}{Coefficient of $\chi_x$ at $Y_{\ii'}$} \\
via $X(\mf W)\cong\mathfrak S_\mf W$ & & & Case $(\zeta_3$a) & Case $(\zeta_3$b) \\
\hline
$(\dt E_6,1,\omega)$ & \#142 & $\scF_{1400_z}:(g_3,\omega)$ & $q^5$ & $q^6$ \\
\hline
$(\dt E_6,1,\omega^2)$ & \#148 & $\scF_{1400_z}:(g_3,\omega^2)$ & $q^5$ & $q^6$ \\
\hline
$(\dt E_6,\epsilon,\omega)$ & \#143 & $\scF_{1400_x}:(g_3,\omega)$ & $q^6$ & $q^5$ \\
\hline
$(\dt E_6,\epsilon,\omega^2)$ & \#149 & $\scF_{1400_x}:(g_3,\omega^2)$ & $q^6$ & $q^5$ \\
\hline
$(\dt E_6,\rho,\omega)$ & \#152 & $\scF_{4480_y}:(g_3,\omega)$ & $q^6$ & $q^7$ \\
\hline
$(\dt E_6,\rho,\omega^2)$ & \#146 & $\scF_{4480_y}:(g_3,\omega^2)$ & $q^6$ & $q^7$ \\
\hline
$(\dt E_6,\rho',\omega)$ & \#153 & $\scF_{4480_y}:(g_6,\omega)$ & $q^7$ & $q^6$ \\
\hline
$(\dt E_6,\rho',\omega^2)$ & \#147 & $\scF_{4480_y}:(g_6,\omega^2)$ & $q^7$ & $q^6$ \\
\end{tabular}
\caption{Coefficients of $\chi_x$ at $Y_{\ii'}$ for the relevant $x\in X(\mf W)$, where $\ii'=(\dt E_7,\zeta_3)$ with $\zeta_3\in\{\omega,\omega^2\}$ the same as in the first and third column.}
\label{TblValuesRx}
\end{table}
}
\end{0}

\begin{Thm}\label{LuConjTrue}
In the setting of \Cref{LuConj}, ($\omega$\emph a) and ($\omega^2$\emph a) hold, that is, Lusztig's conjecture \eqref{OpenCaseE8p3LuConj} (see \Cref{IntroLuConj}) is true.
\end{Thm}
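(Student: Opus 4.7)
The plan is to specialise the identity $(\spadesuit)$ of \Cref{Hecke} to $w = w_{49}$ (defined in \eqref{Defw49}) and $g = u$ running through $\dt E_7^F$. Since the $\mf G^F$-class of every such $u$ misses the double coset $\mf B_0^F \dot w_{49} \mf B_0^F$ (by the results of \cite{LuWeylUni}, \cite{LuEllip} recalled after \eqref{Defw49}), the left-hand side $m(u, w_{49})$ vanishes identically, leaving
$$ 0 \;=\; \sum_{x \in X(\mf W)} C_x\,\nu_x\,\chi_x(u), \qquad C_x \;:=\; \sum_{\phi \in \Irr(\mf W)} \{x, x_\phi\}\, \Trace(T_{w_{49}}, V_\phi), $$
where the coefficients $C_x$ are numerically computable via {\sffamily CHEVIE} together with \cite{GeMi}. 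By \Cref{NonVanishRxNG} and \Cref{VanishRx}, only those $x$ with $A_x \cong A_\ii$ for $\ii = (\cO,\cE)$ and $\cO \in \{\dt E_8, \dt E_8(a_1), \dt E_8(a_2), \dt E_7 + \dt A_1, \dt E_7\}$ survive, and these are precisely the ones tabulated in \Cref{TblGreen} (Springer block) and \Cref{TblValuesRx} (the two cuspidal-support blocks with $J = \{\alpha_1, \ldots, \alpha_6\}$).

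Applying \Cref{ChixXi} and \Cref{LusztigAlgorithm}(b), each surviving $\chi_x(u)$ expands as $q^{e_\ii}\sum_{\ii'} p_{\ii',\ii} Y_{\ii'}(u)$; since $u \in \dt E_7^F$, only the three terms with $\ii' = (\dt E_7, \cE')$, $\cE' \in \{1, \omega, \omega^2\}$, are non-zero. By the linear independence of the $Y_{\ii'}$'s (\Cref{LusztigAlgorithm}(a)), the coefficient of each $Y_{(\dt E_7, \cE')}(u)$ must vanish separately, yielding three scalar identities in the unknown roots of unity $\nu_x$. The identity for $\cE' = 1$ comes only from the principal series in \Cref{TblGreen} and is insensitive to cases (a)/(b); the decisive identities are those for $\cE' = \omega$ and $\cE' = \omega^2$, whose coefficients (read off from \Cref{TblValuesRx}) differ according to whether ($\zeta_3\mathrm{a}$) or ($\zeta_3\mathrm{b}$) of \Cref{LuConj} holds, with the roles of the $q^6$- and $q^7$-weights swapped between $x = (\dt E_6, \rho, \zeta_3)$ and $x = (\dt E_6, \rho', \zeta_3)$.

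The final step is to assume ($\omega\mathrm b$) (respectively ($\omega^2\mathrm b$)) and derive a contradiction from the resulting identity $\sum_x C_x \nu_x q^{e_\ii} = 0$: evaluate the $C_x$ explicitly for the four relevant $x$ and exhibit a coefficient (most naturally the one attached to the highest power of $q$) that cannot vanish for any assignment of $\nu_x$ on the unit circle. The main obstacle is precisely that the $\nu_x$ are not known in closed form, so the argument must be formulated so as to be robust against any choice of roots of unity. The cleanest route is to couple the identities for $\cE' = \omega$ and $\cE' = \omega^2$, which are tied together by complex conjugation through the fixed isomorphism $\Qlbar \simeq \C$, with the modulus constraint $|\nu_x| = 1$, so that the $q$-degree asymmetry between cases (a) and (b) forces a numerical impossibility in case (b) independently of the phases of the $\nu_x$. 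Excluding both ($\omega\mathrm{b}$) and ($\omega^2\mathrm{b}$) leaves exactly ($\omega\mathrm{a}$) and ($\omega^2\mathrm{a}$), establishing \eqref{OpenCaseE8p3LuConj}.
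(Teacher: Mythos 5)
Your proposal follows the paper's proof essentially step for step: the same specialisation of \eqref{GCarpath} to $w_{49}$ and $u\in\dt E_7^F$, the same reduction via \Cref{NonVanishRxNG} and \Cref{VanishRx}, the same expansion of $\chi_x(u)$ through \Cref{ChixXi} and \Cref{LusztigAlgorithm}(b), and the same separation into three scalar identities via the linear independence of the functions $Y_{(\dt E_7,\varsigma)}$, ending with a modulus argument against unknown roots of unity. One substantive correction, though: you locate the decisive case-dependence in the $q^6\leftrightarrow q^7$ swap between $x=(\dt E_6,\rho,\zeta_3)$ and $x=(\dt E_6,\rho',\zeta_3)$, but for $w_{49}$ the Hecke coefficients $c_x(w_{49})$ of precisely these four $x$ turn out to be $0$ (see \Cref{Tblcxw}), so those terms contribute nothing and a contradiction cannot be extracted from them. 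The contradiction actually comes from $x=(\dt E_6,1,\zeta_3)$ and $x=(\dt E_6,\epsilon,\zeta_3)$: the whole matrix $P_{(\zeta_3\mathrm b)}$ differs from $P_{(\zeta_3\mathrm a)}$, not only in the rows and columns labelled by $\rho,\rho'$, so the coefficients of these two $\chi_x$ at $Y_{(\dt E_7,\zeta_3)}$ swap from $(q^5,q^6)$ to $(q^6,q^5)$; combined with $c_x(w_{49})=q^9$ and $-q^8$ this yields, under ($\zeta_3$b), the two-term relation $q^{15}\nu-q^{13}\nu'=0$ with $\nu,\nu'$ roots of unity, which is impossible simply because the two terms have different absolute values. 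In particular, no coupling of the $\omega$- and $\omega^2$-identities via complex conjugation is needed; each is excluded separately by this elementary comparison. Your fallback plan of computing all four $C_x$ and all four $Y$-coefficients would of course surface all of this, so the defect lies in your description of the mechanism rather than in the overall strategy.
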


\begin{proof}
For $x\in X(\mf W)$ and $w\in\mf W$, let us set
\[c_x(w):=\sum_{\phi\in\Irr(\mf W)}\{x,x_\phi\}\Trace(T_w,V_\phi).\]
Hence, with $w_{49}\in\mf W$ as defined in \eqref{Defw49}, \eqref{GCarpath} gives
\begin{equation}\label{muw49Rx}
0=m(u,w_{49})=\sum_{x\in X(\mf W)}c_x(w_{49})\nu_x\chi_x(u)\quad\text{for any }u\in\dt E_7^F.
\end{equation}
In view of \Cref{VanishRx} and \Cref{TblGenSpring}, we only need to look at the values $c_x(w_{49})$ for $x\in X(\mf W)$ as in \Cref{TblGreen} and \Cref{TblValuesRx}. The $c_x(w_{49})$ for those $x$ are given in \Cref{Tblcxw}.
{\renewcommand{\arraystretch}{1.4}
\begin{table}[htbp]
\centering
\begin{tabular}{c|c|c}
$x\in X(\mf W)$ via $X(\mf W)\cong\mathfrak S_\mf W$ & {\sffamily CHEVIE} \cite{MiChv}  & $c_x(w_{49})$ \\
\hline
$1_x$ & \#1 & $q^{14}$ \\
\hline
$8_z$ & \#68 & $q^{12}$ \\
\hline
$35_x$ & \#5 & $-q^{11}$ \\
\hline
$28_x$ & \#3 & $0$ \\
\hline
$112_z$ & \#72 & $0$ \\
\hline
$84_x$ & \#10 & $-q^{10}$ \\
\hline
$(\dt E_6,1,\omega)$ & \#142 & $q^9$ \\
\hline
$(\dt E_6,1,\omega^2)$ & \#148 & $q^9$ \\
\hline
$(\dt E_6,\epsilon,\omega)$ & \#143 & $-q^8$ \\
\hline
$(\dt E_6,\epsilon,\omega^2)$ & \#149 & $-q^8$ \\
\hline
$(\dt E_6,\rho,\omega)$ & \#152 & $0$ \\
\hline
$(\dt E_6,\rho,\omega^2)$ & \#146 & $0$ \\
\hline
$(\dt E_6,\rho',\omega)$ & \#153 & $0$ \\
\hline
$(\dt E_6,\rho',\omega^2)$ & \#147 & $0$
\end{tabular}
\caption{$c_x(w_{49})$ for the relevant $x\in X(\mf W)$}
\label{Tblcxw}
\end{table}
}

Now let us consider the elements $\ii'=(\cO',\cE')\in\cN_\mf G^F$ with $\cO'=\dt E_7$. For $u_0\in\dt E_7$, we have $A_\mf G(u_0)=\langle\overline u_0\rangle\cong\modulo{\Z}{3\Z}$. Thus, with our conventions in \Cref{AGu}, there are three $\ii'$ as above: $(\dt E_7,1)$, $(\dt E_7,\omega)$ and $(\dt E_7,\omega^2)$. For $\varsigma\in\Irr(A_\mf G(u_0))$, let us set
\[\gamma_\varsigma:=Y_{(\dt E_7,\varsigma)}|_{\dt E_7^F}\colon\dt E_7^F\rightarrow\Qlbar.\]
By \Cref{LusztigAlgorithm} (and since, by definition, the $Y_{(\dt E_7,\varsigma)}$ vanish outside of $\dt E_7^F$), $\gamma_1$, $\gamma_\omega$ and $\gamma_{\omega^2}$ are linearly independent functions $\dt E_7^F\rightarrow\Qlbar$. Using Tables \ref{TblGreen}, \ref{TblValuesRx}, \ref{Tblcxw} to evaluate \eqref{muw49Rx} for the various $u\in\dt E_7^F$ (depending on which two of ($\omega$a), ($\omega$b), ($\omega^2$a), ($\omega^2$b) hold), we obtain an expression of the zero function $\dt E_7^F\rightarrow\Qlbar$ as a linear combination of $\gamma_1$, $\gamma_\omega$ and $\gamma_{\omega^2}$, as their coefficients are independent of the chosen $u\in\dt E_7^F$. Due to the linear independence of $\gamma_1$, $\gamma_\omega$, $\gamma_{\omega^2}$, we know that any of their coefficients must be $0$. If $n\in\{1,2,\ldots,166\}$ is the {\sffamily CHEVIE} number of $x\in X(\mf W)$ as in \Cref{Tblcxw}, it will be convenient to write $\chi_n$, $\nu_n$ instead of $\chi_x$, $\nu_x$, respectively. Now let us assume, if possible, that ($\omega$b) holds. Then the coefficient of $\gamma_\omega$ in \eqref{muw49Rx} is $q^{15}\nu_{142}-q^{13}\nu_{143}$, and this must be $0$. However, $\nu_{142}$ and $\nu_{143}$ are roots of unity and $q\geqslant3$, a contradiction. So ($\omega$a) must hold. Similarly, if we assume that ($\omega^2$b) holds, the coefficient of $\gamma_{\omega^2}$ in \eqref{muw49Rx} is $q^{15}\nu_{148}-q^{13}\nu_{149}=0$ which contradicts the fact that both $\nu_{148}$ and $\nu_{149}$ are roots of unity. So ($\omega^2$a) holds as well, as claimed.
\end{proof}

\begin{Rem}
(a) Instead of evaluating the equation \eqref{GCarpath} in \Cref{Hecke} at elements of $\dt E_7^F$, we could have also considered the class $\dt E_7+\dt A_1\subseteq\mf G$ and, e.g., taken $w$ as a Coxeter element of $\mf W$. This leads to a similar contradiction when Lusztig's conjecture is assumed to be false.

(b) In contrast, if we evaluate \eqref{GCarpath} with the correct assumptions ($\omega$a) and ($\omega^2$a) in \Cref{LuConj}, for $u\in\mf G^F_{\mathrm{uni}}$ and $w\in\mf W$ such that $m(u,w)=0$, we see that the coefficient of any fixed $\gamma_\varsigma q^e$ ($\varsigma\in\Irr(A_\mf G(u_0))$, $e\geqslant0$) on the right hand side of \eqref{GCarpath} will never consist of exactly one root of unity $\nu_x$, $x\in X(\mf W)$, so we will (of course) not run into a contradiction of the type as in the proof of \Cref{LuConjTrue}.
\end{Rem}

\begin{Ack}
I deeply thank my PhD advisor Meinolf Geck for suggesting to me to work on this problem, as well as for a very careful reading of the manuscript, including numerous discussions and detailed recommendations on how to improve the paper. I am especially grateful to George Lusztig for comments on the paper and for pointing out an additional reference. This work was supported by the Deutsche Forschungsgemeinschaft (DFG, German Research Foundation) --– Project-ID 286237555 – TRR 195.
\end{Ack}

\bibliographystyle{abbrv}

\bibliography{E8p3}

\end{document}